\theoremstyle{plain}
\newtheorem{theorem}{Theorem}
\newtheorem{corollary}[theorem]{Corollary}
\newtheorem{lemma}[theorem]{Lemma}
\newtheorem{proposition}[theorem]{Proposition}
\theoremstyle{definition}
\newtheorem{definition}[theorem]{Definition}
\newtheorem{question}[theorem]{Question}
\theoremstyle{remark}
\begin{document}
\title{Domains with invertible-radical factorization}

\author{Malik Tusif Ahmed and Tiberiu Dumitrescu}
\address{Abdus Salam School of Mathematical Sciences GCU Lahore, Pakistan}
\email{tusif.ahmed@sms.edu.pk, tusif.ahmad92@gmail.com (Ahmed)}
\address{Facultatea de Matematica si Informatica,University of Bucharest,14 A\-ca\-de\-mi\-ei Str., Bucharest, RO 010014,Romania}
\email{tiberiu@fmi.unibuc.ro, tiberiu\_dumitrescu2003@yahoo.com (Dumitrescu)}

\begin{abstract}
We study those integral domains in which every   proper ideal  can be written as an invertible ideal multiplied by a nonempty  product of proper radical ideals.
\end{abstract}

\thanks{2010 Mathematics Subject Classification: Primary 13A15, Secondary 13F15.}
\keywords{ZPUI-domain, SP-domain, Pr\"ufer domain}

\maketitle

In \cite{VY} Vaughan and Yeagy introduced and studied the notion of {\em SP-domain}, i.e. an integral domain whose ideals are products of radical (also called semiprime) ideals.
They proved that an SP-domain is always almost Dedekind  (i.e. every localization at a maximal ideal is a rank one discrete valuation domain (DVR)). They also gave an example of an SP-domain which is not Dedekind. For examples of almost Dedekind domains which are not SP, see \cite{Y} and \cite[Example 3.4.1]{FHL}.
The study  of SP-domains was continued by Olberding  (in \cite{O}) who gave several characterizations for SP-domains inside the class of almost Dedekind domains and also gave a method to construct SP-domains starting from Boolean topological spaces.

 In a sequence of papers (\cite{Og}, \cite{OI}, \cite{OII}) Olberding introduced and studied the concept of {\em ZPUI (Zerlegung Prim und Umkehrbaridealen) domain}, i.e. a domain for which every proper nonzero ideal can be factored  as a product of an invertible  ideal times a nonempty  product of pairwise comaximal prime ideals (Olberding did his study for commutative rings, but we are interested here only in  domain case). 
 He showed that a domain $A$ is  ZPUI if and only if every proper nonzero ideal can be factored  as a product of a finitely generated ideal times a nonempty finite product of  prime ideals if and only if $A$ is a strongly discrete h-local Pr\"ufer domain \cite[Theorem 1.1]{OII}. Let $A$ be a domain. We recall that $A$ is {\em h-local} if the factor ring $A/I$ is local (resp. semilocal) for each nonzero prime ideal (resp.  nonzero ideal) $I$ of $A$.   Also $A$ is a {\em Pr\"ufer domain} if its nonzero finitely generated ideals are  invertible. A  Pr\"ufer domain is {\em strongly discrete} if it has no idempotent prime ideal except zero.

In this paper we study a new class of domains. Call a  domain $A$ an {\em ISP-domain (invertible semiprime domain)} if each proper ideal of $A$ is can be written as an invertible ideal multiplied by a nonempty  product of proper radical ideals. So any SP-domain (resp. ZPUI-domain) is an ISP-domain.

 In Section 1 we prove the following results.
If $A$ is an ISP-domain, then any factor domain of $A$ and any (flat) overring of $A$ are also ISP-domains (Propositions \ref{1} and \ref{2}, see also Proposition \ref{11}).  Any one-dimensional ISP-domain is almost Dedekind and, consequently, any Noetherian ISP-domain is a Dedekind domain (Corollary \ref{12}).
In Section 2 we prove that if $A$ is an ISP-domain, then $A$ is a strongly discrete Pr\"ufer domain and every nonzero  prime ideal of $A$ is contained in a unique maximal ideal (Theorem \ref{7}). Consequently, an ISP-domain such that every ideal has finitely many minimal prime ideals is a ZPUI-domain (Corollary \ref{14}).
In Section 3 we consider  the question whether every one-dimensional ISP-domain is an SP-domain. We provide a positive answer for domains in which
every nonzero element  is contained in at most finitely many noninvertible maximal ideals (Theorem \ref{3}). In particular, a one-dimensional ISP-domain having only finitely many noninvertible maximal ideals  is an  SP-domain (Corollary \ref{21}).
In Section 4 we give an example of a two-dimensional ISP-domain $A$ which is not h-local. Hence $A$ is neither an SP-domain nor a ZPUI-domain.

Throughout this paper, our rings are commutative and unitary. For any undefined terminology, we refer the reader to \cite{G} or \cite{K}.

\section{Basic results}

We recall the key definition of our paper.

\begin{definition}\label{13}
We say that a domain $A$ is an {\em ISP-domain (invertible semiprime domain)} if every proper nonzero ideal  $I$ of $A$ can be written as $JQ_1\cdots Q_n$ where $n\geq 1$, $J$ is an invertible ideal and each $Q_i$ is a  proper  radical ideal.
\end{definition}

Clearly a ZPUI-domain or an SP-domain is an ISP-domain.  
%
 The well-known Bezout domain $A=\mathbb{Z}+X\mathbb{Q}[X]$ (see  \cite{CMZ} for its basic properties) is not an ISP-domain. Indeed, consider the ideal $I=X\mathbb{Z}[1/2]+X^2\mathbb{Q}[X]$. The radical ideals containing $I$ are $X\mathbb{Q}[X]$ and $nA=n\mathbb{Z}+X\mathbb{Q}[X]$ with $n$ a positive square-free integer. So there is no element $f\in A$ such that $I\subseteq fA$ and $If^{-1}$ is a product of radical ideals. Note that every proper nonzero principal ideal $gA$ can be written in the form required by Definition \ref{13}. Indeed, if $g\not\in X\mathbb{Q}[X]$, then $g$ is a product of principal primes and if  $g\in X\mathbb{Q}[X]$, then $g=2(g/2)A$. Note also that $A$ is strongly discrete. 
\\[1mm]
In this section we prove a few basic properties of ISP-domains.

\begin{proposition}\label{1}
 If $A$ is an  ISP-domain and $P$ a  prime ideal of $A$, then $A/P$ is an ISP-domain.
 \end{proposition}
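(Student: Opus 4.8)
The plan is to pull back a proper nonzero ideal of the factor domain $A/P$ to a proper ideal of $A$ that strictly contains $P$, apply the ISP factorization in $A$, and then push the factorization forward along the quotient map $\pi\colon A \to A/P$. So let $\bar I$ be a proper nonzero ideal of $A/P$ and write $\bar I = I/P$ where $I$ is an ideal of $A$ with $P \subsetneq I \subsetneq A$. In particular $I$ is a proper nonzero ideal of $A$, so by Definition \ref{13} we may write $I = JQ_1\cdots Q_n$ with $n\ge 1$, $J$ invertible, and each $Q_i$ a proper radical ideal of $A$. Applying $\pi$ and using that $\pi$ is surjective (so $\pi(ab\cdots) = \pi(a)\pi(b)\cdots$ on products of ideals), we get $\bar I = \bar J\,\bar Q_1\cdots \bar Q_n$ in $A/P$, where $\bar J = (J+P)/P$ and $\bar Q_i = (Q_i+P)/P$.

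Next I would check the three things this factorization must satisfy in $A/P$. First, each $\bar Q_i$ is a radical ideal of $A/P$: the image of a radical ideal under a surjection need not be radical in general, but here $P$ is prime, and more to the point $Q_i$ is a \emph{radical} ideal of $A$, so $Q_i + P$ — wait, one must be careful. The cleaner route: an ideal $\bar Q$ of $A/P$ is radical iff its preimage $\pi^{-1}(\bar Q)$ is a radical ideal of $A$ containing $P$. If $P \subseteq Q_i$ then $\pi^{-1}(\bar Q_i) = Q_i$ is radical, fine. If $P \not\subseteq Q_i$, then I would instead replace $Q_i$ in the argument by $Q_i + P$, noting $Q_i + P \subseteq \sqrt{Q_i+P} =: R_i$, but that changes the product. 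The better fix is to intersect: since products and intersections of these ideals are comparable up to radical, one shows $\sqrt{\bar Q_i} = \overline{\sqrt{Q_i + P}}$ and that replacing each $\bar Q_i$ by $\sqrt{\bar Q_i}$ only \emph{enlarges} the product, so $\bar I \subseteq \bar J \sqrt{\bar Q_1}\cdots\sqrt{\bar Q_n}$; the reverse inclusion needs the structure theory. I expect this is exactly where the real work lies and where one should lean on the later structural results — in a Pr\"ufer domain (which $A$ turns out to be, by Theorem \ref{7}) radical ideals behave much better under quotients, and ideals are determined locally.

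Second, $\bar J$ is invertible in $A/P$: the image of an invertible ideal under a quotient map by a prime is invertible (an invertible ideal is finitely generated and locally principal, and both properties pass to $A/P$; alternatively $J(J^{-1}) = A$ gives, after applying $\pi$, that $\bar J$ times the image of $J^{-1}$ equals $A/P$, so $\bar J$ is invertible since $A/P$ is a domain). Third, $n \ge 1$ is automatic, and each $\bar Q_i$ (or its radical) is proper in $A/P$ because $Q_i$ is proper in $A$ and contains... here again one uses $P \subsetneq I \subseteq Q_i$ is \emph{not} guaranteed, so properness of $\bar Q_i$ needs $Q_i + P \ne A$; if some $Q_i + P = A$ we can simply delete that factor from the product (it acts as the unit ideal in $A/P$), and we must then ensure at least one factor survives — if \emph{all} of them become improper then $\bar I = \bar J$ is invertible, and an invertible proper ideal of any domain is itself a product of one radical ideal only in special cases, so here I would instead choose the original $I$ more cleverly (e.g. $I = J'Q$ with a single genuinely proper radical factor by picking $I$ inside a maximal ideal containing $P$ properly), guaranteeing survival of a factor. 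The main obstacle, to summarize, is the bookkeeping around radical ideals not containing $P$: handling $\bar Q_i$ versus $\overline{Q_i+P}$ versus $\sqrt{\overline{Q_i+P}}$ cleanly, which I would resolve either by an elementary lemma that $\sqrt{(Q+P)/P} = (\sqrt{Q}+P)/P$ when $\sqrt Q \supseteq$ no... — more honestly, by invoking that $A$ is Pr\"ufer so that for ideals $\bar a \bar b = \bar a \cap \bar b$ holds when the factors are comaximal and radical ideals are intersections of primes, making the forward image of a radical-ideal product again (a radical) product.
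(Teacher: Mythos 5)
Your overall strategy (pull back, factor in $A$, push forward along $\pi$) is the paper's strategy, but your write-up has a genuine gap, and the gap is caused by missing one elementary observation that makes all of your case analysis vacuous. If $I=JQ_1\cdots Q_n$ with $J,Q_1,\dots,Q_n$ integral ideals of $A$, then the product is contained in each factor, so $I\subseteq J$ and $I\subseteq Q_i$ for every $i$; since $P\subseteq I$, \emph{every} factor automatically contains $P$. Hence the situation ``$P\not\subseteq Q_i$'' that you spend most of the proposal trying to repair (replacing $Q_i$ by $Q_i+P$, taking $\sqrt{Q_i+P}$, deleting factors with $Q_i+P=A$, worrying that all radical factors might disappear) simply never occurs. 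With the containments in hand the proof is two lines: $(J/P)(Q_1/P)\cdots(Q_n/P)=(JQ_1\cdots Q_n+P)/P=I/P$, each $Q_i/P$ is a proper radical ideal of $A/P$ by the correspondence between ideals of $A/P$ and ideals of $A$ containing $P$, and $J/P$ is invertible because it is a nonzero (as $J\supseteq I\supsetneq P$) finitely generated locally principal ideal of the domain $A/P$. This is exactly the paper's proof.

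As written, your proposal is not a complete argument: at the crucial point you say the reverse inclusion ``needs the structure theory'' and propose to lean on Theorem \ref{7} (that $A$ is Pr\"ufer), but you never carry that out, and it is in any case the wrong direction of travel — Proposition \ref{1} is a basic result meant to be proved directly from Definition \ref{13}, and invoking the much deeper Theorem \ref{7} here, even if not formally circular, replaces a one-line observation with machinery that you still do not use to close the argument. Similarly, your fallback of ``choosing $I$ more cleverly'' to guarantee a surviving radical factor is unnecessary once you note that each $Q_i$ contains $P$ and is proper in $A$, so $Q_i/P$ is automatically a proper ideal of $A/P$.
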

\begin{proof}
Let  $I\supset P$ be a proper ideal of $A$. As $A$ is an ISP-domain, we can write     $I=JH_1\cdots H_n$ with $J$ an invertible ideal, $n\geq 1$ and each  $H_i$ a  proper  radical ideal. Since all ideals $I,H_1,...,H_n$ contain $P$, we get $I/P=(J/P)(H_1/P)\cdots (H_n/P)$ with $J/P$ invertible and each  $H_i/P$ a proper radical ideal.
\end{proof}

\begin{proposition}\label{2} 
Let $A$ be an ISP-domain and $B$ a flat overring of $A$. Then $B$ is an ISP-domain.
\end{proposition}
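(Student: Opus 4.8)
The plan is to reduce the problem to factoring a suitable ideal of $A$ and then extend that factorization to $B$, using two standard facts about a flat overring $B$ of a domain $A$: every ideal of $B$ is extended from $A$ (so $\mathfrak a=(\mathfrak a\cap A)B$ for each ideal $\mathfrak a$ of $B$), and $B_N=A_{N\cap A}$ for every maximal ideal $N$ of $B$.

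So let $\mathfrak I$ be a proper nonzero ideal of $B$ and set $I:=\mathfrak I\cap A$. Contracting a nonzero element of $\mathfrak I$ (it lies in the fraction field of $A$) to $A$ shows $I\neq 0$, while $I\neq A$ because $1\notin\mathfrak I$; moreover $\mathfrak I=IB$. Applying the ISP property of $A$, I would write $I=JQ_1\cdots Q_n$ with $n\ge 1$, $J$ invertible and each $Q_i$ a proper radical ideal, noting each $Q_i\neq 0$ since $I\neq 0$. Extending to $B$ gives $\mathfrak I=IB=(JB)(Q_1B)\cdots(Q_nB)$. Here $JB$ is invertible, since $(JB)(J^{-1}B)=(JJ^{-1})B=B$; and each $Q_iB$ is radical, since for a maximal ideal $N$ of $B$ we have $(Q_iB)B_N=Q_iA_{N\cap A}=\sqrt{Q_i}\,A_{N\cap A}=\sqrt{Q_iA_{N\cap A}}$, which is radical, and an ideal whose localizations at all maximal ideals are radical is itself radical.

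The one point requiring care is that some of the ideals $Q_iB$ may equal $B$; I must rule out that all of them do. Suppose $Q_1B=\cdots=Q_nB=B$. Then $\mathfrak I=JB$, hence $J\subseteq JB\cap A=\mathfrak I\cap A=I$. On the other hand $I=J(Q_1\cdots Q_n)\subsetneq J$, because $Q_1\cdots Q_n$ is a proper nonzero ideal of $A$ and $J$ is invertible, so $J(Q_1\cdots Q_n)=J$ would force $Q_1\cdots Q_n=A$. This contradiction shows at least one $Q_iB$ is proper. Discarding the trivial factors, we are left with $\mathfrak I=(JB)(Q_{i_1}B)\cdots(Q_{i_k}B)$ for some $k\ge 1$, with $JB$ invertible and each $Q_{i_j}B$ a proper radical ideal, which is the required ISP factorization of $\mathfrak I$.

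I expect the bookkeeping in this last step (ensuring a proper radical factor survives extension) to be the main obstacle; the rest is routine once the two facts about flat overrings recalled above are available.
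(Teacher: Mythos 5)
Your proof is correct and follows essentially the same route as the paper: contract to $A$ via the flat-overring facts ($\mathfrak I=(\mathfrak I\cap A)B$ and $B_N=A_{N\cap A}$), factor in $A$, extend the factorization, check radicality locally, and rule out that all radical factors become trivial by the same cancellation argument with the invertible ideal $J$. No gaps.
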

\begin{proof}
Let  $H$ be  a proper nonzero ideal of $B$ and $I=H\cap A$. 
By \cite[Theorem 2]{Ak}, $IB=H$.  As $A$ is an ISP-domain, we can write   $I=JQ_1\cdots Q_n$ with $J$ an invertible ideal, $n\geq 1$ and all  $Q_i$'s   proper  radical ideals.
Then $H=IB= (JB)(Q_1B)\cdots (Q_nB)$, where $JB$ is invertible  and each $Q_iB$ is  a radical ideal. 
Indeed, since $A_{M\cap A}=B_M$ for every $M\in Max(B)$ (cf. \cite[Theorem 2]{Ak}), 
it is easy to check locally that a radical ideal of $A$ extends to a radical ideal of $B$. If every $Q_iB$  is equal to $B$, then  $H=JB$ and $WB=B$ where $W=Q_1\cdots Q_n$. Hence $J\subseteq JB\cap A=H\cap A=I=JW\subseteq J$, so $J=JW$, thus  $W=A$ (because $J$ is invertible), a contradiction. 
\end{proof}

We give a simple application of Proposition \ref{2}.

\begin{corollary} \label{12}
Any one-dimensional ISP-domain is almost Dedekind. Consequently, a Noetherian ISP-domain is a Dedekind domain.
\end{corollary}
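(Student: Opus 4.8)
The plan is to reduce everything to the local case via Proposition~\ref{2}. Since a localization of $A$ at a maximal ideal is a flat overring, $A_M$ is an ISP-domain for every $M\in\mathrm{Max}(A)$ by Proposition~\ref{2}; if $A$ is one-dimensional then $A_M$ is moreover local of dimension one, so it suffices to prove that a one-dimensional local ISP-domain $(A,M)$ is a DVR. The crucial observation is that in such a domain the only proper nonzero radical ideal is $M$: a nonzero proper ideal is contained in $M$ and in no other prime, so a proper nonzero radical ideal, being the intersection of the primes containing it, equals $M$. Hence the ISP-factorization of any proper nonzero ideal $I$ takes the form $I=JM^{n}$ with $J$ invertible and $n\geq 1$, and since an invertible ideal of a local ring is principal, $I=aM^{n}$ for some nonzero $a\in A$.

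Next I would show $M$ is principal. First, $M^{2}\neq M$: otherwise for any nonzero $a\in M$ we get $aA=bM^{n}=bM$ with $b\neq 0$ (all powers of $M$ coincide), so writing $a=bm$ with $m\in M$ and cancelling $b$ in $bM=aA=bmA$ gives $M=mA$; then $M=M^{2}=m^{2}A$ forces $A=mA$ after cancelling $m$, which is absurd. So choose $a\in M\setminus M^{2}$ with $a\neq 0$ (possible by prime avoidance, using that $(0)$ is prime), and write $aA=bM^{n}$ with $b\neq 0$, $n\geq 1$. If $b$ were a nonunit then $aA=bM^{n}\subseteq M^{n+1}\subseteq M^{2}$, contradicting $a\notin M^{2}$; so $b$ is a unit and $aA=M^{n}$, and then $n\geq 2$ would give $a\in M^{2}$, so $n=1$ and $M=aA$. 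Consequently every proper nonzero ideal $bM^{n}=(ba^{n})A$ is principal, $A$ is a PID, and a one-dimensional local PID is a DVR. This proves the first assertion.

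For the Noetherian statement, let $A$ be a Noetherian ISP-domain; I would show $\mathrm{ht}(M)\leq 1$ for every maximal ideal $M$. By Proposition~\ref{2} we may pass to $A_M$ and assume $(A,M)$ is local; suppose $h:=\mathrm{ht}(M)=\dim A\geq 2$. Pick $x_{1}\in M\setminus M^{2}$ nonzero; then $x_{1}A$ has height one, $\dim A/x_{1}A=h-1\geq 1$, and a system of parameters of $A/x_{1}A$ lifts to $x_{2},\dots,x_{h}\in M$ with $(x_{1},\dots,x_{h})$ an $M$-primary ideal. Set $I=(x_{1},x_{2}^{2},x_{3},\dots,x_{h})$. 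Since $x_{2}^{2}\in I$ we have $\sqrt{I}\supseteq(x_{1},\dots,x_{h})$, so $\sqrt I=M$ and $M$ is the only proper radical ideal containing $I$. As in the second paragraph, the ISP-factorization of $I$ reduces to $I=cM^{n}$ with $c\neq 0$ and $n\geq 1$; a nonunit $c$ would force $I\subseteq M^{2}$, contradicting $x_{1}\notin M^{2}$, so $c$ is a unit and $I=M^{n}$, and then $n\geq 2$ would again put $x_{1}$ in $M^{2}$; hence $n=1$ and $M=(x_{1},x_{2}^{2},x_{3},\dots,x_{h})$. But $x_{2}^{2}\in M^{2}$, so $M/M^{2}$ is spanned by the images of $x_{1},x_{3},\dots,x_{h}$, giving $\mu(M)\leq h-1<h=\mathrm{ht}(M)$, which contradicts Krull's generalized principal ideal theorem. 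Therefore $\dim A\leq 1$; if $\dim A=0$ then $A$ is a field, and if $\dim A=1$ then $A$ is almost Dedekind by the first part, hence a Noetherian almost Dedekind domain, hence Dedekind.

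I expect the delicate point to be the Noetherian step, namely manufacturing an ideal $I$ whose only proper radical overideal is $M$ while $I\not\subseteq M^{2}$ and $I\neq M$; the choice $I=(x_{1},x_{2}^{2},x_{3},\dots,x_{h})$ with $x_{1}\notin M^{2}$ achieves this, and the contradiction comes purely from the inequality $\mu(M)\geq\mathrm{ht}(M)$. Everything else is the routine reduction to the local case via Proposition~\ref{2} and the collapse of all proper nonzero radical ideals onto $M$ in dimension one.
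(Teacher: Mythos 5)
Your proof is correct, and the first assertion is handled exactly as in the paper: localize via Proposition~\ref{2}, observe that in a one-dimensional local domain the only proper nonzero radical ideal is $M$, and squeeze a DVR out of the factorization $I=cM^{n}$ (the paper phrases this more briefly as ``$xA=yM^{k}$, so $M$ is invertible, hence $A$ is a DVR,'' while you spell out $M\neq M^{2}$ and principality, which is fine and even a bit more self-contained). For the Noetherian statement the underlying strategy is the same --- manufacture an ideal $I$ with $\sqrt{I}=M$ and $I\not\subseteq M^{2}$, use the ISP hypothesis to force $I=M$, and derive a contradiction --- but the implementations differ. The paper first reduces to a two-dimensional local domain, takes $x\in M\setminus M^{2}$, a height-one prime $P\ni x$, and $y\in M\setminus P$, concludes $(P,y^{2})=M$, and gets the contradiction by passing to $A/P$, where the maximal ideal would be generated by $\bar{y}^{2}$, forcing $\bar{y}=0$ or $\bar{y}$ a unit. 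You instead stay in arbitrary dimension $h\geq 2$, build $I=(x_{1},x_{2}^{2},x_{3},\dots,x_{h})$ from a system of parameters with $x_{1}\notin M^{2}$, and get the contradiction from $\mu(M)\leq h-1<h=\operatorname{ht}(M)\leq\mu(M)$ via Krull's height theorem. Both are sound; the paper's route is slightly shorter and avoids systems of parameters, while yours avoids the dimension-two reduction and the passage to the quotient $A/P$, at the cost of invoking $\operatorname{ht}(M)\leq\mu(M)$. (Minor cosmetic point: no prime avoidance is needed to pick $x_{1}$, since $0\in M^{2}$ makes any element of $M\setminus M^{2}$ automatically nonzero, and $M\neq M^{2}$ in the Noetherian local case is just Nakayama.)
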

\begin{proof}
Let $A$ be a one-dimensional ISP-domain.
By Proposition \ref{2}, we may assume that $A$ is local with maximal ideal $M$.   Let $x\in M-\{0\}$. Since the radical ideals of $A$ are $0$ and $M$, we get $xA=yM^k$ for some $y\in A$ and  $k\geq 1$, so $M$ is invertible, hence  $A$ is a DVR.
For the ``Consequently'' part, assume, by the contrary, that $A$ is a Noetherian ISP-domain which is not Dedekind. By the first part, $dim(A)\geq 2$, so, using Proposition \ref{2}, we may assume that $A$ is a two-dimensional local domain (with maximal ideal $M$). Let $x\in M-M^2$, $P$ a height one prime ideal containing $x$ and let $y\in M-P$. Since $P\not\subseteq M^2$, $M$ is minimal over $(P,y^2)$ and $A$ is an ISP-domain, we get  $(P,y^2)=M$. Modding out by $P$, we get a contradiction.
\end{proof}




\section{ISP domains are Pr\"ufer strongly discrete}

The following theorem is the main result of this paper.

\begin{theorem}\label{7} 
If $A$ is an ISP-domain, then 

$(a)$ $A$ is a strongly discrete Pr\"ufer domain, and 

$(b)$ every nonzero  prime ideal of $A$ is contained in a unique maximal ideal.
\\
In particular, a local domain is an ISP-domain if and only if it is a strongly discrete valuation domain.
\end{theorem}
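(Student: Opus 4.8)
\emph{Overall strategy.} The plan is to reduce everything to the local case. Since each localization $A_M$ at a maximal ideal is a flat overring, Proposition~\ref{2} shows that $A_M$ is again an ISP-domain; as ``Pr\"ufer'' is a local property and a Pr\"ufer domain all of whose localizations at maximal ideals are strongly discrete valuation domains has no nonzero idempotent prime (if $P=P^{2}$ then $PA_M=(PA_M)^{2}$ for any maximal $M\supseteq P$), part~$(a)$ and the ``in particular'' assertion both follow once we prove: \emph{a local domain is an ISP-domain if and only if it is a strongly discrete valuation domain.} One direction is immediate: a strongly discrete valuation domain is local, hence $h$-local, and a strongly discrete Pr\"ufer domain, so by \cite[Theorem~1.1]{OII} it is a ZPUI-domain, hence an ISP-domain. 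So only the forward implication, and separately the global statement $(b)$, require work.

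\emph{A local ISP-domain $(A,M)$ is a valuation domain (the crux).} It suffices to show $(a)\subseteq(b)$ or $(b)\subseteq(a)$ for all nonzero $a,b\in M$. Assuming the contrary, I would first reduce to the case $\sqrt{(a,b)}=M$: if $P$ is a minimal prime over $(a,b)$ with $P\subsetneq M$, then $A_P$ is a local ISP-domain of smaller Krull dimension, so an induction on dimension (base case $\dim\le 1$ being Corollary~\ref{12}) together with Proposition~\ref{1} applied to $A/P$ lets us assume $P=M$. Then the only proper radical ideal containing $(a,b)$ is $M$, so an ISP-factorization must read $(a,b)=(c)M^{k}$ with $c\ne 0$ and $k\ge 1$; since $(a,b)\subseteq(c)$ we cancel $c$ and may assume $(a,b)=M^{k}$, so $M$ is finitely generated. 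A Nakayama-type argument in the spirit of the proof of Corollary~\ref{12} (examining the ideals $(a,b^{j})$, whose only radical overideal is also $M$, to show that $c$ must be a unit times $a$, which collapses the descending chain $M\supsetneq(a,b^{2})\supsetneq\cdots$) then forces $M$, hence $(a,b)$, to be principal --- contradicting incomparability of $a$ and $b$. Making the dimension reduction and the final Nakayama/primary-case argument clean is the main obstacle.

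\emph{A local valuation ISP-domain is strongly discrete; conclusion of $(a)$.} In a valuation domain every radical ideal is prime and $QQ'=Q$ whenever $Q\subsetneq Q'$ are primes; hence in any factorization $(a)=JQ_{1}\cdots Q_{n}$ of a proper nonzero principal ideal the product $Q_{1}\cdots Q_{n}$ collapses to $Q^{k}$ with $Q$ the smallest of the $Q_{i}$, $k\ge 1$, and $Q$ a nonzero prime. If $Q$ were idempotent, then $(a)=JQ$ gives $(a)Q=JQ^{2}=JQ=(a)$, so $a\in aQ$, whence $Q=A$ --- impossible; thus every prime arising this way is non-idempotent. Finally, for an arbitrary nonzero prime $P$: if $P$ is branched, choose $0\ne a\in P$ with $\sqrt{(a)}=P$, so $P$ occurs as such a $Q$; if $P$ is unbranched, pass to $A_P$ and invoke that a rank-one valuation domain with idempotent maximal ideal is not an ISP-domain (the principal ideals cannot be factored). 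So $A$ is strongly discrete, and globalizing gives $(a)$.

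\emph{Part $(b)$.} Suppose a nonzero prime $P$ is contained in two distinct maximal ideals $M_{1}\ne M_{2}$. Pick $0\ne a\in P$ and $b\in M_{1}\setminus M_{2}$, and factor $(a,b)=JQ_{1}\cdots Q_{n}$. Localizing at $M_{2}$, where $b$ is a unit, forces $(a,b)A_{M_{2}}=A_{M_{2}}$, hence $Q_{i}\not\subseteq M_{2}$ for every $i$ (and the principal part $J=(x)$ has $x\notin M_{2}$). Localizing at $M_{1}$, using $(a)$ to know $A_{M_{1}}$ is a valuation domain and that $v_{M_{1}}(a)>v_{M_{1}}(b)$ (because $a\in PA_{M_{1}}$ while $b\notin PA_{M_{1}}$), we get $(a,b)A_{M_{1}}=(b)A_{M_{1}}$, so the $Q_{i}$ with $Q_{i}\subseteq M_{1}$ localize to a common prime power of $A_{M_{1}}$. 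The contradiction comes from the rigidity of a \emph{radical} ideal of the Pr\"ufer ISP-domain $A$: it cannot be a proper prime after localization at $M_{1}$ and yet the unit ideal after localization at $M_{2}$ while containing the element $a\in M_{1}\cap M_{2}$. Extracting this rigidity cleanly is the delicate point of $(b)$.
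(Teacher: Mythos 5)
The core of the theorem is exactly the step you yourself flag as ``the main obstacle'': showing that a local ISP-domain is a (strongly discrete) valuation domain, and your sketch does not close it. Concretely: (i) the induction on Krull dimension has no foundation, because an ISP-domain need not be finite-dimensional (strongly discrete valuation domains of arbitrary rank are ZPUI, hence ISP), and finiteness of $\dim A$ is not known at that stage; (ii) the reduction to $\sqrt{(a,b)}=M$ is not justified --- comparability of $aA_P$ and $bA_P$ in $A_P$ does not yield comparability of $aA$ and $bA$ in $A$, and it is unclear what Proposition \ref{1} applied to $A/P$ (where $a$ and $b$ map to $0$) is supposed to contribute; (iii) from $(a',b')=M^k$ you conclude that $M$ is finitely generated, but $M^k$ finitely generated does not imply $M$ finitely generated unless one already knows $M^k$ (hence $M$) is invertible, which is essentially what is being proved, and the promised ``Nakayama-type argument'' is never supplied. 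The paper's proof goes through genuinely different machinery: Lemma \ref{4} ($P\subseteq M^2A_M$ for nonzero primes $P\subset M$), Lemma \ref{5} ($MA_M$ principal when $M$ is minimal over $(P,x)$), Lemma \ref{6} ($A/I$ zero-dimensional for an invertible radical proper ideal $I$), and then the reduction via \cite[Lemma 3.2]{OII} to showing $PA_P$ principal, obtained by factoring $xA_P=yH_1\cdots H_n$ and using Lemma \ref{6} to force $H_1=PA_P$. Your outline replaces none of this.

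Part $(b)$ has the same defect: no contradiction is actually exhibited. The configuration you describe --- a radical ideal of $A$ containing $a$, contained in $M_1$ but not in $M_2$ --- is not by itself impossible, so the ``rigidity'' you appeal to is precisely the missing argument. The paper instead localizes at $A-(M_1\cup M_2)$ to get a semilocal ISP-domain, notes that there every nonzero radical ideal is a finite product of pairwise comaximal minimal primes (incomparable primes in a Pr\"ufer domain are comaximal), concludes the semilocal ring is ZPUI, and then invokes \cite[Theorem 1.1]{OII} to get h-locality, contradicting the assumption that a nonzero prime lies in two maximal ideals; that external theorem is an input your sketch has no substitute for. A further minor slip: for an unbranched prime $P$, the localization $A_P$ is in general not of rank one, so the appeal to ``a rank-one valuation domain with idempotent maximal ideal is not ISP'' does not apply as stated, although that particular step could be repaired along the lines of Lemmas \ref{5} and \ref{6}. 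As it stands, the proposal is an outline whose two decisive steps are left open.
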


We need a string of three lemmas.




\begin{lemma}\label{4}
If $A$ is an ISP-domain and $P\subset M$ are nonzero prime ideals of $A$, then $P\subseteq M^2A_M$.
\end{lemma}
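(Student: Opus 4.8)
The plan is to exploit the ISP factorization of a carefully chosen principal ideal inside the localization $A_M$, and to derive a contradiction if $P\not\subseteq M^2A_M$. By Proposition \ref{2}, $A_M$ is again an ISP-domain, so we may localize and assume $A$ is local with maximal ideal $M$ and that $P\subset M$ is a nonzero prime with $P\not\subseteq M^2$. Pick an element $x\in P$ with $x\notin M^2$ (possible by the assumption) and also pick $y\in M\setminus P$. The idea is to factor a principal ideal manufactured from $x$ and $y$ — something like $(x,y^2)$ or the principal ideal generated by $x+y^2$, or more simply $xA$ itself — and track which radical ideals can appear. Since $x\notin M^2$, the maximal ideal $M$ is minimal over any ideal containing $x$ but not much else; in particular one expects $M$ to be forced to appear among the radical factors with "multiplicity one," which combined with $x\notin M^2$ should pin down the factorization too tightly to accommodate $P$.

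**The key steps**, in order: (1) Reduce to the local case via Proposition \ref{2}. (2) Using $x\notin M^2$, argue that in any ISP-factorization $xA = J Q_1\cdots Q_n$ of a suitably chosen ideal, the factors $Q_i$ that equal $M$ occur at most once, since $x\in M\setminus M^2$ forbids an $M^2$ factor. Here one must be careful that the $Q_i$ are radical but need not be prime or comaximal, so "counting multiplicity" means arguing that $Q_i\subseteq M$ for at most one index once we know $x\notin M^2$ (if two factors were contained in $M$ their product would lie in $M^2$, forcing $x\in M^2$). (3) Consequently at most one $Q_i$ is contained in $M$, so all but one of the radical factors equal... no — rather, all the radical factors except possibly one must fail to be contained in $M$, but they are proper, contradiction unless there is exactly one factor, forcing $xA = JQ_1$ with $Q_1$ the unique radical ideal below $M$. (4) Now identify $Q_1$: it is a proper radical ideal; bring in $P$ by choosing the starting ideal to involve $P$, e.g. apply the argument to an ideal between $P$ and $M$, or intersect/compare with $P$, to conclude $P\subseteq Q_1\subseteq M$ and then push $P$ into $M^2A_M$ via a further factorization of an ideal strictly between $Q_1$ and $M$. (5) Iterate or take a suitable radical to land $P$ inside $M^2A_M = M^2$.

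**The main obstacle** will be step (2)–(4): controlling radical (semiprime) factors that are neither prime nor pairwise comaximal, so that the usual multiplicity bookkeeping from the prime/ZPUI setting is unavailable. The safe substitute is the observation that if two of the $Q_i$ lie in $M$ then their product lies in $M^2$, hence the whole product lies in $M^2$, contradicting $x\notin M^2$; this forces essentially all radical factors to be $M$-coprime, but since $A$ is \emph{local} an ideal $M$-coprime to $M$ is the unit ideal — so in fact there is exactly one nonunit radical factor, call it $Q$, and $xA=JQ$ with $Q\not\subseteq M^2$, $Q$ radical. The delicate point is then to show $P\subseteq Q$ is impossible unless $P\subseteq M^2$: if $P\not\subseteq M^2$ we may run the same argument with an element of $P\setminus M^2$ playing the role of $x$ and with a second, strictly larger radical ideal interposed, extracting a descending constraint that collapses. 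I expect the cleanest route is: choose $x\in P\setminus M^2$, write $xA=JQ$ as above with $Q$ radical and $Q\neq A$; since $x\in Q$ and $Q$ is radical, $P\subseteq Q$ (as $P$ is a minimal prime of $xA$ contained in... every prime over $x$, hence over $Q$); but $x\notin M^2$ gives $Q\not\subseteq M^2$, and a final comaximality/idempotent argument — using that $A$ is strongly discrete, which we may invoke if Theorem \ref{7}(a) is already available, or prove $M$ is not idempotent directly from $x\notin M^2$ — forces $Q=M$, whence $xA=JM$ with $J$ principal, so $M$ is principal and $A$ is a DVR, contradicting $\operatorname{ht}(P)\geq 1$ with $P\neq 0$. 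Tightening exactly which of these pieces are logically available before Theorem \ref{7} is proved is the part that needs the most care.
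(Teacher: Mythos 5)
There is a genuine gap. Your reduction to the local case and your counting step are fine and match the paper's idea: in the local ISP-domain $A$ with maximal ideal $M$, an ideal not contained in $M^2$ can admit only a factorization with trivial invertible part and a single proper radical factor, i.e.\ it must itself be a radical ideal (two proper factors, or a proper $J$ together with a proper $Q$, would push the product into $M^2$). But you apply this to the wrong ideal. Taking $x\in P\setminus M^2$ and concluding that $xA$ is radical leads nowhere as you execute it: the claim ``$P\subseteq Q$ since $x\in Q$ and $Q$ is radical'' is unjustified --- a radical ideal containing $x$ is an intersection of primes containing $x$, and such primes need not contain $P$ (the containment you cite goes the wrong way; one only gets that $P$ \emph{contains} some minimal prime of $xA$). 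Worse, your terminal contradiction ``$xA=JM$, so $M$ is principal, so $A$ is a DVR'' is false for non-Noetherian local domains: a rank-two discrete valuation domain has principal maximal ideal and dimension $2$. In fact the eventual content of the paper (Theorem \ref{7}) is precisely that such $A_M$ are strongly discrete valuation domains of possibly higher rank, where $M$ \emph{is} principal and $P\subseteq\bigcap_n M^n$; so ``$M$ principal'' can never serve as the contradiction here.

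The missing idea is the choice of ideal in the paper's proof of Lemma \ref{4}: take $x\in M\setminus P$ (not $x\in P\setminus M^2$) and factor $(P,x^2)$. Since $P\subseteq (P,x^2)$ and $P\not\subseteq M^2$ by assumption, the counting argument you already have forces $(P,x^2)$ to be a radical ideal; then $x\in\sqrt{(P,x^2)}=(P,x^2)$ gives $(P,x)=(P,x^2)$, and passing to the domain $A/P$ yields $\bar xA/P=\bar x^2A/P$ with $\bar x\neq 0$, so $\bar x$ is a unit of $A/P$ while $x\in M$ --- a contradiction. Note that the element must lie outside $P$ for the final step modulo $P$ to bite, and the ideal must contain $P$ so that the hypothesis $P\not\subseteq M^2$ transfers to it; your proposal satisfies neither, and no amount of bookkeeping on the factors of $xA$ repairs that.
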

\begin{proof}
By Proposition  \ref{2}, we may assume that $A$ is local with maximal ideal $M$. Assume that $P\not\subseteq M^2$ and take $x\in M-P$. Since $A$ is an ISP-domain and $P\not\subseteq M^2$, we get that $(P,x^2)$ is a radical ideal, so $(P,x^2)=(P,x)$ which gives a contradiction after modding out by $P$.
\end{proof}

\begin{lemma}\label{5}
Let $A$ be an ISP-domain, $P \subset M$ prime ideals and $x\in M-P$ such that $M$ is minimal over $(P,x)$. Then $MA_M$ is a principal ideal.
\end{lemma}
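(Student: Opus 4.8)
By Proposition \ref{2} I may localize and assume $A$ is local with maximal ideal $M$, so that $M$ is now the radical of $(P,x)$; my goal is to show $M$ is principal. Since $A$ is an ISP-domain, I write $(P,x) = J Q_1 \cdots Q_n$ with $J$ invertible (hence, $A$ being local, principal, say $J = yA$) and each $Q_i$ a proper radical ideal. Taking radicals, $M = \sqrt{(P,x)} = \sqrt{y} \cap \sqrt{Q_1} \cap \cdots \cap \sqrt{Q_n}$; since each $Q_i$ is radical and proper, $\sqrt{Q_i} = Q_i \subseteq M$, and as $M$ is the unique maximal ideal this forces the intersection to be $M$ only if the various prime divisors cooperate. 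The clean way to exploit this: each $Q_i \subseteq M$, so $(P,x) = yQ_1\cdots Q_n \subseteq yM$, giving $(P,x) \subseteq yM \subseteq yA$. In particular $x \in yA$ and $P \subseteq yA$.

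**The key step.** Now I want to promote $yA$ to be exactly $M$. Write $x = yt$ for some $t \in A$. I claim $t$ is a unit: if not, $t \in M$, and then from $(P,x) = yQ_1\cdots Q_n$ together with $x = yt$ and $P \subseteq yA$ I would be able to factor out one more power of $M$-worth of stuff and contradict minimality of $M$ over $(P,x)$ — more precisely, since $(P,x) \subseteq yM$ we get $y^{-1}(P,x) \subseteq M$ is an integral (indeed proper) ideal whose radical, after dividing the factorization $yQ_1\cdots Q_n$ by $y$, is still contained in $M$; but then $M$ is the radical of a proper ideal strictly "deeper" than $(P,x)$, and comparing with Lemma \ref{4} (which tells us $P \subseteq M^2$) one sees $(P,x) \subseteq M^2$, so $x \in M^2$ as well. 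That contradicts the fact that $M$ is minimal — in a local ISP-domain — over $(P,x)$ unless $x$ generates $M$ modulo $P$. The honest route is: since $(P,x) \subseteq yA$ and $x \in yA$, and since $P \subseteq M^2 \subseteq yM$ by Lemma \ref{4} applied after noting $yA \supseteq (P,x) \supseteq P$, we get that $yA$ contains $(P,x)$ and has radical $M$; but $(P,x)$ itself has radical $M$, so $yA$ and $(P,x)$ have the same radical, and $yA$ is principal, hence $yA$ is $M$-primary. It remains to show $yA = M$ rather than a smaller primary ideal, which is where I use that $A$ is (by Theorem \ref{7}(a), or rather its proof, which should not be circular — so instead I use Lemma \ref{4} directly) a domain where no prime is idempotent: $\bigcap_k M^k = 0$ would finish it, but the cleanest is to observe $M/yA$ is the maximal ideal of the local ring $A/yA$, which is $M$-primary hence has $M/yA$ nilpotent; combined with $yA \subseteq M$ and the factorization, $M = yA$ follows because $yQ_1\cdots Q_n = (P,x)$ with all $Q_i \subseteq M$ forces $n$ small.

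**Where the real work is.** The main obstacle is making the last paragraph rigorous without invoking Theorem \ref{7}, since Lemma \ref{5} is used to prove it. I expect the correct argument is: from $(P,x) = yQ_1\cdots Q_n$ and $(P,x) \subseteq yM$, if some $Q_i \subsetneq M$ strictly then $Q_i$ is a radical ideal strictly between $(P,x)$'s radical computations and $M$, but the only radical ideals of $A$ lying under $M$ and over the height-one prime $P$ relevant here are limited by Lemma \ref{4}; one shows each $Q_i = M$ or $Q_i = P'$ for a prime $P' \subsetneq M$, and since $P' \subseteq M^2$ by Lemma \ref{4}, having any such factor would force $(P,x) \subseteq yM^2 \subseteq M^3$ — iterating, $(P,x) \subseteq \bigcap_k M^k$, impossible. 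Hence $n = 1$ and $Q_1 = M$, so $(P,x) = yM$; but also $(P,x) \supseteq P$ and $yA \supseteq (P,x)$, and applying the ISP factorization to $yA$ (principal) together with $(P,x) = yM \subseteq yA$ forces $yA = M$ after chasing radicals one more time. So the plan is: (1) localize; (2) factor $(P,x) = yQ_1\cdots Q_n$ with $y$ principal; (3) use Lemma \ref{4} to show no proper nonmaximal radical factor can occur without pushing $(P,x)$ into $\bigcap_k M^k = 0$; (4) conclude $(P,x) = yM$ and then $M = yA$ is principal. The delicate point, and the one I would spend the most care on, is step (3)'s control of the finitely many radical factors $Q_i$ and the verification that $\bigcap_k M^k = 0$ holds here (which follows since $(P,x) \neq 0$ is contained in all of them if the descent doesn't terminate).
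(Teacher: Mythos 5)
Your reduction to the local case and the factorization $(P,x)=JQ_1\cdots Q_n=yQ_1\cdots Q_n$ are fine (in fact you can say more at once: each $Q_i$ contains $(P,x)$, and after localizing $M$ is the only prime containing $(P,x)$, so every $Q_i=M$ and $(P,x)=yM^n$; the alternative "$Q_i=P'$ for a prime $P'\subsetneq M$" never occurs). But from that point on the argument has a genuine gap. Your mechanism for ruling out $n\geq 2$, and more generally for terminating the "descent", is the assertion $\bigcap_{k\geq 1}M^k=0$, and that assertion is false precisely in the rings this lemma is about: a two-dimensional strongly discrete valuation domain is a local ISP-domain satisfying the hypotheses, and there $\bigcap_k M^k$ is the nonzero height-one prime (consistently with Lemma \ref{4}, which pushes $P$ into $M^2$); so "$(P,x)\subseteq\bigcap_k M^k$, impossible" is not a contradiction. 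Two subsidiary steps are also wrong: $x\in M^2$ does not contradict minimality of $M$ over $(P,x)$ (take $M=mA$, $x=m^2$, $P\subset M^2$), and $(P,x)=yM$ does not force $M=yA$ (take $x=m^3$, $P\subseteq M^3$: then $(P,x)=m^3A=(m^2)M$ with $yA=M^2$). In particular the hardest case --- $M$ idempotent, where $(P,x)=yM^n=yM$ and no element of $M-M^2$ exists --- is never actually dealt with, and it cannot be dispatched by any $\bigcap M^k$ argument.

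The paper's proof handles exactly this point by a different device: assuming $M=M^2$, it gets $(P,x)=yM$, checks $y\notin P$ (else $P=yA$ would force $M=A$ after cancelling $y$), hence $P=yP$; writing $x=yz$ with $z\in M$ and cancelling the invertible $y$ in $(yP,yz)=yM$ yields $(P,z)=M$, so $M/P$ is a nonzero principal idempotent maximal ideal of the domain $A/P$, which is impossible (Nakayama). Once $M\neq M^2$ is known, the principal generator is not $y$ at all: pick $w\in M-M^2$; by Lemma \ref{4} no nonzero prime properly inside $M$ can contain $w$, so $M$ is the only prime containing $w$, and the ISP factorization $wA=y'M^k$ together with $w\notin M^2$ forces $k=1$ and $y'$ a unit, i.e., $M=wA$. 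To repair your write-up you would need to replace the $\bigcap_k M^k$ step by the passage to $A/P$ in the idempotent case and then switch from $y$ to an element of $M-M^2$ for the final step.
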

\begin{proof}
By Proposition \ref{2}, we may assume that $A$ is local with maximal ideal $M$. We show first that $M$ is not idempotent. On contrary assume that $M^2=M$. Note that $\sqrt{(P,x)}=M$ is the only radical ideal containing $(P,x)$. As $A$ is an ISP-domain and $M=M^2$, we get $(P,x)=yM$ for some $y \in A$. As $P\subseteq yM$, we get $y  \notin P$ (otherwise $P=yA\subseteq yM$), hence $P=Py$.
From $x \in yM$, we get  $x=yz$ for some $z \in M$. Now from $(Py,yz)=yM$, we get $(P,z)=M$, so $M/P$ is a principal idempotent nonzero maximal ideal of $A/P$, a contradiction.
Thus $M$ is not idempotent and let us pick $w \in M-M^2$. By Lemma \ref{4}, $M$ is the  only prime ideal containing $w$, so $wA=M$ because $A$ is an ISP-domain.
\end{proof}

\begin{lemma}\label{6}
If $A$ is an ISP-domain and $I$ an invertible radical proper ideal of $A$, then $A/I$ is zero-dimensional. 
\end{lemma}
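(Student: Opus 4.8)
The goal is to show that if $I$ is an invertible radical proper ideal of an ISP-domain $A$, then every prime ideal containing $I$ is maximal. The plan is to argue by contradiction: suppose $P \subsetneq M$ are primes with $I \subseteq P$, so both are nonzero. Since $I$ is radical and $I \subseteq P$, we have $\sqrt{I}=I$, and localizing at $M$ gives that $IA_M$ is a radical invertible proper ideal of $A_M$; by Lemma \ref{4} applied to the chain $PA_M \subset MA_M$ (or directly to $P \subset M$), we get $P \subseteq M^2 A_M$. The key tension to exploit is that an invertible ideal is locally principal, so $IA_M = tA_M$ for some $t \in M$; being radical in the local domain $A_M$, the ideal $tA_M$ would then have to be a prime of height one (a principal radical ideal in a domain is a finite intersection of its minimal primes, each principal, and in a local Prüfer-like setting this forces it to be prime), yet it sits below $M$, contradicting that $P$ lies deep inside $M^2A_M$.

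More carefully, first I would reduce to the local case: by Proposition \ref{2}, $A_M$ is again an ISP-domain, $IA_M$ is still invertible, radical and proper, and $A/I$ is zero-dimensional iff $(A/I)_{M/I}=A_M/IA_M$ is zero-dimensional for every maximal $M \supseteq I$. So assume $A$ is local with maximal ideal $M$ and $I \subseteq M$ is invertible radical proper; then $I=tA$ for some $t \in M$. If $A/I$ were not zero-dimensional, there is a nonzero prime $P$ of $A/I$, i.e. a prime $P$ with $I=tA \subseteq P \subsetneq M$. Now I would use Lemma \ref{4}: since $P \subset M$ are nonzero primes, $P \subseteq M^2$. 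In particular $t \in P \subseteq M^2$, so $tA \subseteq M^2$.

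Next comes the main step, and the place I expect to do real work: deriving a contradiction from the fact that $tA$ is a radical ideal contained in $M^2$. Since $tA$ is radical and $A$ is (by the reduction, or by anticipating Theorem \ref{7}(a) — though to avoid circularity I would rederive what I need) a domain in which $tA = \bigcap_j P_j$ over the minimal primes $P_j$ of $tA$, and $tA$ is invertible hence locally principal at each $P_j$; I would show each $P_j$ is invertible (a minimal prime of a locally principal radical ideal), so $P_j$ is generated by $t$ times a unit after inverting enough, forcing $tA = P_1$ to be prime of height one when $A$ is local. Then $P$ with $tA \subseteq P \subsetneq M$ would have to contain the height-one prime $tA$, so either $P = tA$ or $P \supsetneq tA$; in the first case $P=tA \not\subseteq M^2$ since $t$ would be part of a generating set forcing $tA \not\subseteq M^2$ unless $M$ itself is small — here I would invoke Lemma \ref{5}: because $M$ is minimal over $(tA, x)$ for suitable $x \in M-P$ (using that $A/P$ has dimension one after further localization, or directly that $M$ has height bounded appropriately), $M A_M$ is principal, say $M = mA$; then $tA$ radical with $tA \subseteq M^2 = m^2A$ and $tA$ a height-one prime is impossible since $m \notin tA$ yet $m^2 \in tA$ violates primeness. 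This last contradiction is the crux, and I would streamline it by working entirely inside $A_M$ where $M=mA$ is principal: $t = m^2 s$ for some $s$, and $tA = \sqrt{tA} \ni m$ would follow from $m^2 \in tA$, i.e. $mA \subseteq tA \subseteq m^2 A \subseteq mA$, forcing $tA = mA = m^2 A$, hence $M = M^2$, contradicting strong discreteness (no idempotent maximal ideal in a local ISP-domain, which one gets from Lemma \ref{5}'s argument that $M$ is not idempotent whenever $M$ is minimal over a $(P,x)$). So the real obstacle is organizing these local principalities without assuming the full Prüfer conclusion of Theorem \ref{7}; I would keep the argument self-contained by only using Lemmas \ref{4} and \ref{5} together with the defining factorization applied to the principal ideal $tA$ and to ideals of the form $(t, x)$.
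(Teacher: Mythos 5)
Your reduction to the local case and the use of Lemmas \ref{4} and \ref{5} (localize at a suitable $M$ minimal over $(P,x)$, get $M=mA$ principal, $I=tA$ principal, and $t\in M^2$ from $P\subseteq M^2$) is the same scaffolding as the paper's proof. But the step where you actually derive the contradiction has a genuine gap: both in the ``height-one prime'' discussion and in your streamlined version you assert $m^2\in tA$, whereas what you have established is the reverse containment $tA\subseteq m^2A$, i.e. $t=m^2s$. From $t=m^2s$ you cannot conclude $mA\subseteq tA$, so the chain $mA\subseteq tA\subseteq m^2A\subseteq mA$ and the conclusion $M=M^2$ do not follow. Likewise your auxiliary claim that a principal radical ideal in a local domain must be a height-one prime is false in general (e.g. $xyA$ in $k[x,y]_{(x,y)}$ is radical with two minimal primes), and justifying it here via the Pr\"ufer property would be circular, since Theorem \ref{7} uses this lemma.

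The correct way to exploit radicality, and the paper's actual punchline, is: from $t=sm^2$ note that $(sm)^2=s\cdot sm^2=st\in tA$, hence $sm\in\sqrt{tA}=tA$; therefore $t=sm^2=(sm)m\in tmA$, and cancelling $t$ (a nonzero element of a domain) gives $1\in mA=M$, a contradiction. So the radical hypothesis is applied to the element $sm$, not to $m$ itself; no primeness of $tA$ and no idempotence argument is needed. If you replace your final paragraph with this computation (and arrange, as the paper does, to choose $P\subset M$ with $I\subseteq P$ and $M$ minimal over $(P,x)$ \emph{before} localizing, so that Lemma \ref{5} applies), the proof goes through.
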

\begin{proof} 
On contrary assume that \em{dim}$(A/I)\geq 1$. Then there exist two prime ideals $P\subset M$ and  $x \in M-P$ such that 
$I \subseteq P$   and $M$ is minimal over $(P,x)$. By Lemma \ref{5}, $MA_M$ is principal. Localizing at $M$, we may assume that $A$ is local with maximal ideal $M$. Then $I=yA$ and $M=zA$ for some $y,z \in A$. As $I\subset M$,
we get $y=az^2$ for some $a\in A$, so  $az \in \sqrt{yA}=yA$, hence $y=az^2\in yzA$, thus $1\in zA=M$, a contradiction.
\end{proof}


\vspace{2mm}
{\em Proof of Theorem \ref{7}}.
$(a)$ By \cite[Lemma 3.2]{OII},  it suffices 
to show that $PA_P$ is a principal ideal for every nonzero prime  ideal $P$ of $A$. Set $B=A_P$ and $M=PA_P$. 
By   Proposition  \ref{2}, $B$ is an ISP-domain.
Given  $x \in M-\{0\}$, we write $xB=yH_1\cdots H_n$ with $y\in B$,  $n\geq 1$  and $H_i$ a proper  radical ideal for $i=1$ to $n$. Then each $H_i$ is invertible hence principal, because $B$ is local.
By  Lemma \ref{6}, we have $Spec(B/H_1)=\{M/H_1\}$, hence $H_1=\sqrt{H_1}=M$.

$(b)$ By Proposition \ref{2},  we may assume that $A$ is semilocal. Indeed, if 
$M_1$ and $M_2$ are two distinct maximal ideals containing a nonzero prime ideal, then $(b)$ fails for $A_S$, where $S=A-(M_1\cup M_2)$.
Now let $I$ be a nonzero radical ideal. Since 
$A$ is a semilocal Pr\"ufer domain, it follows that $I$ has finitely many minimal primes, say $P_1$,...,$P_n$. Then $I=P_1\cap\cdots \cap P_n=P_1\cdots P_n$ because $P_1$,..,$P_n$ are incomparable prime ideals in a   Pr\"ufer domain, hence pairwise comaximal. Since $A$ is an ISP-domain and every nonzero radical ideal  is a product of primes, $A$ is a ZPUI-domain. By \cite[Theorem 1.1]{OII}, $A$ is h-local, so $(b)$ holds. 
The ``in particular'' assertion follows from \cite[Theorem 1.1]{OII}.
\hspace{5mm} $\square$\\[1mm]

We give two corollaries of Theorem \ref{7}.


\begin{corollary}\label{11}
 Any overring of an ISP-domain is also an ISP-domain.
\end{corollary}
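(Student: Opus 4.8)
The plan is to reduce the statement about an arbitrary overring $B$ of an ISP-domain $A$ to the flat case already handled in Proposition \ref{2}. By Theorem \ref{7}(a), $A$ is a Pr\"ufer domain, and it is classical that every overring of a Pr\"ufer domain is a flat overring (indeed, a localization-like construction: $B=\bigcap_{P}A_P$ over the primes $P$ surviving in $B$, and Pr\"ufer overrings are automatically flat). So the first step is simply to invoke this: since $A$ is Pr\"ufer by Theorem \ref{7}(a), the overring $B$ is flat over $A$.

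Once flatness is in hand, the second and final step is to apply Proposition \ref{2} verbatim to conclude that $B$ is an ISP-domain. That is the whole argument.

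The only point requiring a little care is the justification that overrings of Pr\"ufer domains are flat; I would cite the standard reference (\cite{G}, e.g. the theorem that a domain is Pr\"ufer iff every overring is flat, or equivalently iff every overring is a localization in the generalized sense / an intersection of localizations). I do not expect a genuine obstacle here: the substantive work has already been done in Theorem \ref{7} and Proposition \ref{2}, and this corollary is essentially a one-line combination of the two. If one wanted to be self-contained one could instead note that for each prime $P$ of $A$, $A_P$ is a valuation domain (being a localization of a Pr\"ufer domain), hence $B_N = A_{N\cap A}$ for every prime $N$ of $B$ because valuation overrings are localizations, which gives flatness directly via \cite[Theorem 2]{Ak}-type criteria; but citing the Pr\"ufer/flat-overring equivalence is cleanest.
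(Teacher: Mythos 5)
Your argument is exactly the paper's: invoke Theorem \ref{7}(a) to see that $A$ is Pr\"ufer, use the classical fact that every overring of a Pr\"ufer domain is flat (the paper cites Richman, \cite[page 798]{R}), and then apply Proposition \ref{2}. Correct, and essentially identical to the published proof.
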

\begin{proof}
Let $A$ be an ISP-domain and $B$ an overring of $A$.  By Theorem \ref{7}, $A$ is a   Pr\"ufer domain, so $B$ is $A$-flat, cf. \cite[page 798]{R}. Apply Proposition \ref{2}.
\end{proof}

\begin{corollary}\label{14}
 For a domain $A$, the following  are equivalent.
 
 $(a)$ $A$ is a ZPUI-domain.
 
 $(b)$ $A$ is an h-local strongly discrete Pr\"ufer domain.
 
  $(c)$ $A$ is an h-local ISP-domain.
 
 $(d)$ $A$ is a generalized Dedekind ISP-domain.
 
 $(e)$ $A$ is an ISP-domain such that $Min(I)$ is finite for each  ideal $I$.
\end{corollary}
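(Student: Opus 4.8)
The plan is to prove the equivalences by establishing a cycle of implications together with the relevant characterizations from Theorem \ref{7} and \cite[Theorem 1.1]{OII}. The implication $(a)\Rightarrow(b)$ is exactly Olberding's theorem \cite[Theorem 1.1]{OII}, and $(b)\Rightarrow(c)$ follows because any strongly discrete Pr\"ufer domain satisfies the factorization hypothesis of Definition \ref{13}: in an h-local strongly discrete Pr\"ufer domain every nonzero radical ideal is a finite product of pairwise comaximal primes (h-locality gives finitely many minimal primes over any nonzero ideal, and in a Pr\"ufer domain incomparable primes are comaximal), so in particular $A$ is ZPUI, hence ISP. For $(c)\Rightarrow(a)$, I would run the argument used in part $(b)$ of the proof of Theorem \ref{7}: an h-local ISP-domain is in particular a Pr\"ufer domain (by Theorem \ref{7}(a)), every nonzero radical ideal has finitely many minimal primes and hence factors as a product of pairwise comaximal primes, so the ISP factorization of any proper nonzero ideal is already a ZPUI factorization; thus $A$ is a ZPUI-domain. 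This closes the cycle $(a)\Leftrightarrow(b)\Leftrightarrow(c)$.

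It remains to weave in $(d)$ and $(e)$. For $(d)$: by Theorem \ref{7}, any ISP-domain is already a strongly discrete Pr\"ufer domain, so "generalized Dedekind ISP-domain" just adds the condition that $A$ be generalized Dedekind; I would invoke the standard fact that a Pr\"ufer domain is generalized Dedekind if and only if it is strongly discrete and every nonzero ideal has only finitely many minimal primes (equivalently $A$ is strongly discrete and "finite character"-type on radicals), and then observe that for a strongly discrete Pr\"ufer domain this finiteness on minimal primes of radical ideals, combined with h-locality considerations, is equivalent to being h-local; more cleanly, I would show $(d)\Rightarrow(e)$ trivially (generalized Dedekind domains have $Min(I)$ finite for all $I$) and $(b)\Rightarrow(d)$ by noting that an h-local strongly discrete Pr\"ufer domain is generalized Dedekind. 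For $(e)$: clearly $(b)\Rightarrow(e)$ since an h-local domain has $Min(I)$ finite for every ideal. The substantive direction is $(e)\Rightarrow(c)$ (or directly $(e)\Rightarrow(a)$): an ISP-domain with $Min(I)$ finite for every ideal is, by Theorem \ref{7}(a), a Pr\"ufer domain in which every nonzero radical ideal is a finite product of pairwise comaximal primes, so again every ISP factorization is a ZPUI factorization and $A$ is ZPUI; then $(a)\Rightarrow(b)$ gives h-locality, so $(e)\Rightarrow(b)$. This is essentially Corollary \ref{14} as already announced in the introduction ("an ISP-domain such that every ideal has finitely many minimal prime ideals is a ZPUI-domain").

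Putting the pieces together: $(a)\Leftrightarrow(b)\Leftrightarrow(c)$ from the first paragraph, $(b)\Rightarrow(d)\Rightarrow(e)\Rightarrow(b)$ from the second, and we are done. The main obstacle I anticipate is the bookkeeping around $(d)$: one must pin down the precise definition of "generalized Dedekind" being used (Gabelli–Popescu: a Pr\"ufer domain that is strongly discrete and such that every nonzero ideal has only finitely many minimal primes over it, equivalently every ideal is "divisorial-of-finite-type" in the appropriate sense) and check that, under the standing assumption that $A$ is already ISP — hence strongly discrete Pr\"ufer by Theorem \ref{7} — the generalized Dedekind condition collapses to the finite-minimal-primes condition of $(e)$, which is where $(d)$ and $(e)$ genuinely meet. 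Everything else is a direct application of Theorem \ref{7} and \cite[Theorem 1.1]{OII}, with no new computation required.
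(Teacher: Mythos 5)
Your proposal is correct and follows essentially the same route as the paper: $(a)\Leftrightarrow(b)$ from \cite[Theorem 1.1]{OII}, the ``well-known'' links through generalized Dedekind domains for $(c)$, $(d)$, $(e)$, and the substantive step $(e)\Rightarrow(a)$ obtained by repeating the second half of the proof of Theorem \ref{7}$(b)$ (radical ideals have finitely many minimal primes, which are pairwise comaximal in a Pr\"ufer domain, so ISP factorizations yield invertible-times-primes factorizations). The only difference is the bookkeeping of which cycle of implications is closed, which is immaterial.
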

\begin{proof} 
$(a)$ $\Leftrightarrow$ $(b)$ is a part of \cite[Theorem 1.1]{OII}. 
Implications $[(a)$ and $(b)]$ $\Rightarrow$ $(c)$ $\Rightarrow$ $(d)$ $\Rightarrow$ $(e)$ are well-known.
For $(e)$ $\Rightarrow$ $(a)$, repeat the second half of the  proof of Theorem \ref{7} part $(b)$.
\end{proof}


\section{Almost Dedekind ISP-domains}

In this section, we consider  the question whether any one-dimensional ISP-domain is an SP-domain. 
First, we recall some terminology from \cite{O}. Let $A$ be an almost Dedekind domain. The maximal ideals of $A$ containing a radical invertible ideal are called {\em non-critical}, while the others are called {\em critical}. Given $I$ an ideal of $A$ and $n\geq 1$, we set $V_n(I)=\{ M\in Max(A)\mid I\subseteq M^n\}$. Note that  $V_{n+1}(I)\subseteq V_n(I)$ and $V_1(I)$ is the usual Zariski closed set $V(I)$. Next, we recall \cite[Theorem 2.1]{O} and  add a new assertion $(g)$. 

\begin{theorem} {\em (\cite[Theorem 2.1]{O})}\label{24}
For an almost Dedekind domain $A$, the following assertions are equivalent.

$(a)$ $A$ is an SP-domain.

$(b)$ $A$ has no critical maximal ideals.

$(c)$ The radical of an invertible ideal is  invertible. 

$(d)$ Ever principal ideal is a product of radical ideals.

$(e)$ For every nonzero proper (principal) ideal $I$ and $n\geq 1$, the set $V_n(I)$ is (Zariski) closed in $Spec(A)$ and $V_m(I)$ is empty for some large $m$.

$(f)$ Every nonzero proper ideal $I$ can be factorized (uniquely)  as $I=J_1J_2\cdots J_n$ with radical ideals $J_1\subseteq J_2\subseteq \cdots \subseteq J_n$.

$(g)$ For every nonzero proper ideal $I$, we have $I=\sqrt{I}H$ for some ideal $H$.
\end{theorem}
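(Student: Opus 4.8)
The plan is to prove Theorem~\ref{24}$(g)$ by establishing the cycle of implications $(a)\Rightarrow(g)\Rightarrow(d)$, since $(d)\Rightarrow(a)$ is already part of the quoted \cite[Theorem 2.1]{O}. The implication $(a)\Rightarrow(g)$ is essentially immediate: if $A$ is an SP-domain, then any nonzero proper ideal $I$ factors as a product of radical ideals $I=J_1J_2\cdots J_n$, and after reindexing so that $J_1$ is a minimal member of the family we can even appeal to $(f)$ to arrange $J_1\subseteq J_2\subseteq\cdots\subseteq J_n$. Then $\sqrt{I}=\sqrt{J_1\cdots J_n}=\sqrt{J_1}\cap\cdots\cap\sqrt{J_n}=J_1\cap\cdots\cap J_n=J_1$ (the last equality because the $J_i$ are nested), so $I=\sqrt{I}\,H$ with $H=J_2\cdots J_n$ (take $H=A$ if $n=1$). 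Even without invoking $(f)$, one can argue locally: in an almost Dedekind domain each $A_M$ is a DVR, so the exponents of $\sqrt{I}$ at each maximal ideal are the minima of those of the $J_i$, whence $\sqrt{I}\mid I$.

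The substantive direction is $(g)\Rightarrow(d)$. Assume every nonzero proper ideal $I$ satisfies $I=\sqrt{I}\,H$ for some ideal $H$. I would first note that $H$ may be taken to be a proper ideal unless $I$ is itself radical: indeed $H=I\sqrt{I}^{-1}$ (the radical of a nonzero ideal in an almost Dedekind domain is invertible? — no, that is exactly what we must not assume), so instead I argue that $\sqrt{I}\supseteq I=\sqrt{I}H\supseteq \sqrt{I}\,I$, hence locally at each $M$ the $M$-exponent of $H$ equals that of $I$ minus that of $\sqrt{I}$, which is nonnegative; and $H$ is proper iff $\sqrt{I}\subsetneq\sqrt{I}H$ locally somewhere, i.e. iff $I$ is non-radical. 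Now given a nonzero proper principal (or arbitrary) ideal $I$, apply $(g)$ repeatedly: $I=\sqrt{I}H_1$, then $H_1=\sqrt{H_1}H_2$, and so on, producing $I=\sqrt{I}\sqrt{H_1}\sqrt{H_2}\cdots$. The key point is that this process terminates. To see termination, fix a maximal ideal $M$ and track the $M$-adic value $v_M$: if $v_M(I)=e$, then $v_M(\sqrt{I})\in\{0,1\}$ and equals $1$ precisely when $e\ge 1$, so $v_M(H_1)=e-1$; thus after $e$ steps the $M$-component is exhausted. Hence for each $M$ only finitely many of the radical factors $\sqrt{H_j}$ are contained in $M$, and — using that $I$ has, in the almost Dedekind setting, only finitely many maximal ideals above it when... — here is the real obstacle: one must show that the \emph{global} process terminates, i.e. that after finitely many steps $H_k=A$. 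This requires knowing that $V(I)$ is ``bounded'' in the sense that $V_n(I)=\emptyset$ for large $n$, which is part of condition $(e)$. So the honest route is $(g)\Rightarrow(e)$: show that $V_n(I)$ is closed and eventually empty.

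For $(g)\Rightarrow(e)$, I would proceed as follows. Given a nonzero proper ideal $I$ with $I=\sqrt I H$, observe $\sqrt H=\sqrt{I\sqrt I^{-1}}$ satisfies $V(\sqrt H)=V(H)\subseteq V(I)$, and more precisely $M\in V_{n}(I)$ forces $M\in V_{n-1}(H)$ for all $M\in V(I)$ once one checks the exponent bookkeeping locally (at $M\notin V(I)$ nothing is contained in $M$). Iterating, $V_n(I)\subseteq V(H_{n-1})$, and the radical ideal $\sqrt{H_{n-1}}$ is the product of the maximal ideals in $V(H_{n-1})$ that are — wait, it is a radical ideal, hence an intersection of the primes minimal over it, which in a one-dimensional domain are maximal ideals. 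So $V_n(I)$ is contained in the closed set $V(\sqrt{H_{n-1}})$; combined with the obvious reverse-type containment $V(\sqrt{H_{n-1}})\subseteq V_n(I)$? — this needs care, since $\sqrt{H_{n-1}}$ could pick up maximal ideals of value exactly $n-1$ in $H_{n-1}$ hence value exactly $n$ in $I$, giving equality $V_n(I)=V(\sqrt{H_{n-1}})$, which is Zariski closed. Eventual emptiness then follows because the descending chain $I\supseteq H_1\supseteq H_2\supseteq\cdots$, were it not to stabilize at $A$, would give for each $M\in V(I)$ a strict drop in $v_M$ at each step until exhaustion; the delicate point — and the main obstacle — is ruling out an infinite strictly descending chain of the $H_j$ in which the ``active'' maximal ideal keeps changing, i.e. proving the chain is eventually $A$. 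I expect to handle this by invoking that in an almost Dedekind domain a radical ideal $\sqrt{H_j}$ is invertible iff... no: rather, I would show directly that $\bigcap_j H_j=0$ is impossible for a nonzero $I$ by a compactness/closedness argument on $\mathrm{Max}(A)$ (which is Boolean here, following Olberding's framework), so the chain terminates, giving $I=\sqrt I\sqrt{H_1}\cdots\sqrt{H_{k}}$ as a finite product of radical ideals. This yields $(d)$, closing the loop with the already-established $(d)\Rightarrow(a)\Rightarrow(g)$.
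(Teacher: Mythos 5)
Your overall architecture is reasonable: conditions $(a)$--$(f)$ are quoted from \cite[Theorem 2.1]{O}, only $(g)$ is new, and the half going from the SP property (via $(f)$) to $(g)$ is indeed immediate, exactly as you say ($\sqrt I=J_1$ when the radical factors are nested). The exponent bookkeeping you do for the other direction is also correct as far as it goes: iterating $I=\sqrt I\,H_1$, $H_1=\sqrt{H_1}\,H_2,\dots$ and computing local values gives $V_n(I)=V(H_{n-1})$, hence closedness of each $V_n(I)$. But the substantive point --- termination, i.e.\ that some $H_k=A$, equivalently $V_m(I)=\emptyset$ for large $m$ --- is never proved; you explicitly leave it as something you ``expect to handle'' by a compactness/Boolean-space argument on $\mathrm{Max}(A)$, which is not an argument. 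Moreover the auxiliary claim you lean on is off target: the chain is \emph{ascending}, $I\subseteq H_1\subseteq H_2\subseteq\cdots$ (since $H_j=\sqrt{H_j}\,H_{j+1}\subseteq H_{j+1}$), not descending as you wrote, so $\bigcap_j H_j\supseteq I\neq 0$ holds trivially and says nothing about whether the process ever reaches $A$. Without termination you obtain neither $(d)$ nor the emptiness half of $(e)$, so the cycle back to $(a)$ is not closed; this is a genuine gap, not a cosmetic one.

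The missing step has a short proof, and it is how the paper closes the argument (proving $(g)\Rightarrow(f)$ directly). Set $J_1=\sqrt I$ and $J_{k+1}=\sqrt{H_k}$; from $I\subseteq H_1\subseteq H_2\subseteq\cdots$ one gets $J_1\subseteq J_2\subseteq\cdots$. If no $H_n$ equals $A$, every $J_k$ is proper, so the union of this increasing chain is a proper ideal and hence lies in a single maximal ideal $M$ --- your worry that the ``active'' maximal ideal keeps changing cannot occur, precisely because the radical factors are nested. Then $I=J_1\cdots J_nH_n\subseteq M^n$ for every $n\geq 1$, which is impossible since $A_M$ is a DVR and $I\neq 0$. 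Hence the iteration stops, $I=J_1\cdots J_n$ with $J_1\subseteq\cdots\subseteq J_n$ radical, and $(f)$ (so also $(d)$ and $(e)$) follows at once; your closedness computation for $V_n(I)$ becomes unnecessary. I recommend replacing the compactness gesture by this one-line nestedness argument.
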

\begin{proof}
 Since only $(g)$ is new,  it suffices to prove the equivalence of $(f)$ and $(g)$. 
 $(g)\Rightarrow (f)$ We have $I=\sqrt{I}H_1$ and $H_1=\sqrt{H_1}H_2$ for some ideals $H_1$ and $H_2$. Set $J_1=\sqrt{I}$ and $J_2=\sqrt{H_1}$, so $I=J_1J_2H_2$. From $I\subseteq H_1$, we get $J_1\subseteq J_2$.
 Repeating, we get $I=J_1J_2\cdots J_nH_n$ with  radical ideals $J_1\subseteq \cdots \subseteq J_n$. If some $H_n$ is $A$, we are done. If not, let $M$ be a maximal ideal  containing all $J_i$'s. Then $I=J_1J_2\cdots J_nH_n\subseteq M^n$ for each $n\geq 1$, which is a contradiction because $A_M$ is a DVR.
 Conversely, from $I=J_1\cdots J_n$ with $J_1\subseteq \cdots \subseteq J_n$ radical ideals, we get $\sqrt{I}=J_1$, so we are done. 
\end{proof}

In the next lemma, we recall two known facts.

\begin{lemma} \label{10}
If $A$ is an almost Dedekind domain which is not Dedekind, then:

$(a)$ Every noninvertible nonzero ideal of $A$   is contained in some noninvertible maximal ideal.

$(b)$ Every infinite  closed subset  of $Max(A)$ contains some  noninvertible maximal ideal.
\end{lemma}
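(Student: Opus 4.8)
The plan is to reduce both statements to standard facts about almost Dedekind domains that are not Dedekind, namely that such a domain has infinitely many maximal ideals and that it has at least one noninvertible maximal ideal (since a Dedekind domain is exactly an almost Dedekind domain all of whose maximal ideals are invertible, by the invertibility-is-local-plus-finitely-generated criterion). For part $(a)$, let $I$ be a nonzero noninvertible ideal. Being nonzero in an almost Dedekind domain, $I$ is a proper ideal (if $I=A$ it is invertible), so it is contained in some maximal ideal $M$; the task is to choose $M$ noninvertible. If every maximal ideal containing $I$ were invertible, I would argue that $I$ itself is invertible: in an almost Dedekind (hence Pr\"ufer) domain invertibility of a nonzero ideal is equivalent to being finitely generated locally at each maximal ideal containing it, and at the maximal ideals not containing $I$ the localization $I_M = A_M$ is trivially principal. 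The one subtlety is that $I$ might be contained in infinitely many maximal ideals, so one cannot simply take a finite product; here is where I would invoke the known description (or cite the literature, e.g. \cite{G}) that in an almost Dedekind domain a nonzero ideal is invertible iff it is contained in only finitely many maximal ideals and is finitely generated, or equivalently iff $V(I)$ is finite and $I$ is locally principal. Thus if $V(I)$ meets only invertible maximal ideals, one shows $V(I)$ is finite (using that each such $M$ contributes $M_M^{e(M)}$ with bounded exponents forced by $I$ being a fixed ideal) and concludes $I$ is invertible, a contradiction.

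For part $(b)$, let $C \subseteq \mathrm{Max}(A)$ be an infinite closed subset, say $C = V(I)$ for some ideal $I$ (we may take $I$ to be the intersection of the members of $C$, which is a radical ideal). Suppose for contradiction that every maximal ideal in $C$ is invertible. Then $I = \bigcap_{M \in C} M$ is a radical ideal each of whose minimal primes (these are exactly the elements of $C$) is invertible. The idea is to derive from this that $I$ is invertible, whence by part $(a)$ applied in the contrapositive — or directly by the almost-Dedekind invertibility criterion — $V(I) = C$ must be finite, contradicting infinitude. To see $I$ is invertible when all its minimal primes are invertible maximal ideals: locally at any $M \in C$ we have $I_M = M_M$ (since $A_M$ is a DVR and $I \subseteq M$ but $I \not\subseteq M^2$, the latter because otherwise $M$ would not be minimal over $I$ in a way compatible with $I$ being the full intersection — one checks $I_M = M_M$ directly), and locally at $M \notin C$ we have $I_M = A_M$; so $I$ is locally principal, and a locally principal ideal in a domain with $V(I)$ a closed set consisting of finitely or infinitely many maximal ideals is invertible precisely when $V(I)$ is finite.

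The main obstacle I anticipate is the point just flagged: handling the passage from "locally principal / every containing maximal ideal is invertible" to "the ideal itself is invertible," because invertibility is not purely local — it requires finite generation, equivalently finiteness of $V(I)$, in the almost Dedekind setting. So the real content is the implication "$I$ noninvertible $\Rightarrow$ $V(I)$ is infinite or some maximal ideal over $I$ is noninvertible," and both parts of the lemma are essentially repackagings of this. Since the lemma is billed as recalling "two known facts," I would in the write-up simply cite the relevant results from \cite{G} (the treatment of almost Dedekind domains and of invertibility in Pr\"ufer domains) and from \cite{O}, indicating that $(a)$ follows from the characterization of invertible ideals in almost Dedekind domains and that $(b)$ follows from $(a)$ together with the observation that an infinite closed set cannot be $V(H)$ for an invertible ideal $H$.
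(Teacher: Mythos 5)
There is a genuine gap: both halves of your argument rest on an invertibility criterion that is false in exactly the setting of the lemma. You assert (in various forms) that in an almost Dedekind domain a nonzero ideal is invertible if and only if it is locally principal and $V(I)$ is finite, and in particular that an invertible ideal is contained in only finitely many maximal ideals. Neither direction is usable here. First, in an almost Dedekind domain \emph{every} nonzero ideal is locally principal (each $A_M$ is a DVR), so ``locally principal'' carries no information and cannot force invertibility; a noninvertible maximal ideal $M$ is itself locally principal with $V(M)=\{M\}$ finite. Second, and more seriously, an invertible ideal need not have finite $V(I)$: since $A$ is almost Dedekind but not Dedekind, it fails finite character, so some nonzero nonunit $x$ lies in infinitely many maximal ideals, and $xA$ is principal, hence invertible, with $V(xA)$ infinite. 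Thus the step in your part $(b)$ ``$I$ invertible $\Rightarrow V(I)=C$ finite'' is false, and your part $(a)$ argument, which needs ``all maximal ideals over $I$ invertible $\Rightarrow V(I)$ finite,'' is really the content of part $(b)$ itself, so the two halves lean on each other through a false principle rather than on an independent proof.

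What is actually needed is different in kind. For $(a)$: in a Pr\"ufer (in particular almost Dedekind) domain, noninvertible nonzero means not finitely generated; by Zorn's Lemma $I$ is contained in an ideal maximal among non--finitely-generated ideals, such an ideal is prime (a standard ``maximal implies prime'' argument), and in the one-dimensional setting this prime is a noninvertible maximal ideal containing $I$. For $(b)$: after reducing via $(a)$ to the case where $I$ is invertible with $V(I)$ infinite, one must do genuine work, e.g.\ cite \cite[Proposition 3.2.2]{FHL}, or, as in the paper, form $H=\sum_{P\in V(I)}IP^{-n_P}$ where $IA_P=(PA_P)^{n_P}$, note $V(H)\subseteq V(I)$, and show $H$ is not finitely generated by using comaximality of the $P_i$ and \cite[Lemma 5.1]{Og} to turn a finite-generation relation into the absurd inclusion $\cap_{i=1}^k P_i^{n_i}\subseteq P_{k+1}$; then $(a)$ applied to $H$ produces the noninvertible maximal ideal inside $V(I)$. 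Your correct observation that the radical ideal $\bigcap_{M\in C}M$ localizes to $MA_M$ at each $M\in C$ does not substitute for this step.
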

\begin{proof}
$(a)$ is  a well-known application of Zorn's Lemma (every non finitely generated ideal is contained in a non finitely generated prime ideal).
$(b)$ Let $I$ be a nonzero ideal such that $V(I)$ is infinite.  By $(a)$, we may assume that $I$ is invertible, so the assertion follows from  \cite[Proposition 3.2.2]{FHL}. We give an alternative proof. For each $P\in V(I)$,
we have $IA_P=(PA_P)^{n_P}$ for some (unique) positive integer $n_P$. Consider the ideal $H=\sum_{P\in V(I)} IP^{-n_P}$. 
 It suffices to show that $H$ is not finitely generated, because $I\subseteq H$ implies $V(H)\subseteq V(I)$, so part $(a)$ applies. Suppose that $H$ is finitely generated. Then there exist distinct ideals $P_1,...,P_{k+1}\in V(I)$ such that 
 $IP_{k+1}^{-n_{k+1}}\subseteq \sum_{i=1}^k IP_i^{-n_i}$ where $n_j=n_{P_j}$.
Since the ideals $P_j$ are mutually comaximal, we have $IP_{k+1}^{-n_{k+1}}\subseteq I(\cap_{i=1}^k P_i^{n_i})^{-1}$, cf. \cite[Lemma 5.1]{Og}. We cancel $I$ and get $\cap_{i=1}^k P_i^{n_i}\subseteq P_{k+1}$, which is a contradiction.
\end{proof}


Recall that a domain $A$ {\em has  weak factorization}, if every nonzero nondivisorial ideal $I$ can be factored as the product of its divisorial closure $I_\nu$ and a finite product of maximal ideals; i.e., 
 $I=I_{\nu}M_1M_2\cdots M_n$ where $M_1$,$M_2$,...,$M_n$ are maximal ideals, cf. \cite{FHL1}.
By \cite[Proposition 4.2.14]{FHL}, an almost Dedekind domain $A$ has  weak factorization if and only if every nonzero  element of $A$ is contained in at most finitely many noninvertible maximal ideals.

Now let $A$ be an almost Dedekind domain $A$ which has  weak factorization.
Denote by $Z$ the set of noninvertible maximal ideals of $A$.
We introduce an ad-hoc concept: call an ideal $H$ of $A$ a {\em clean ideal}, if $H$ is invertible,  $V(H)\cap Z=\{M\}$  and $H\not\subseteq M^2$. 
Let $M\in Z$ and $f\in M-\{0\}$. By our hypothesis $V(f)\cap Z$ is finite, say equal to $\{M,M_1,...,M_n\}$. 
By  Prime Avoidance Lemma (e.g. \cite[Proposition 4.9]{G}),  
we can pick an element $g\in M-(M^2\cup M_1\cup \cdots \cup M_n)$, so $(f,g)$ is clean. Hence every $M\in Z$ contains a clean ideal. 
With terminology and notation above, we have:


\begin{theorem}\label{3}
For an almost Dedekind domain $A$ which has weak factorization, 
the following assertions are equivalent.
 
 $(a)$ $A$ is an SP-domain.
 
 $(b)$ $A$ is an ISP-domain.
 
 $(c)$ For every clean ideal $H$, the set $V_2(H)$ is finite.
 
 $(d)$ Every $M\in Z$ contains a clean ideal $H$ such that $V_2(H)$ is finite.
\end{theorem}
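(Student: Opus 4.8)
The plan is to prove the chain $(a)\Rightarrow(b)\Rightarrow(c)\Rightarrow(d)\Rightarrow(a)$, using Theorem \ref{24} to recognize SP-domains. The implication $(a)\Rightarrow(b)$ is immediate since every SP-domain is an ISP-domain. For $(b)\Rightarrow(c)$, let $H$ be a clean ideal, so $H$ is invertible and $V(H)\cap Z=\{M\}$ with $H\not\subseteq M^2$. Since $A$ is an ISP-domain, write $H=JQ_1\cdots Q_n$ with $J$ invertible and each $Q_i$ a proper radical ideal. Because $A$ is almost Dedekind (one-dimensional), each $Q_i$ is an intersection of maximal ideals, and since $H$ is invertible each $Q_i$ must in fact be invertible, hence (being radical and invertible in an almost Dedekind domain) a finite intersection of \emph{invertible} maximal ideals — this is where the clean hypothesis $V(H)\cap Z=\{M\}$ bites: the only noninvertible maximal ideal that could appear in any $Q_i$ is $M$, but a radical invertible ideal cannot contain $M$ by Lemma \ref{10}(a) applied appropriately (an invertible ideal's radical, if it contains a noninvertible maximal, would force that maximal to be invertible). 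Consequently $H=J'Q$ where $J'$ is invertible, $Q=\sqrt Q$ contains $M$, and $Q$ contains no square. Then $V_2(H)\subseteq V_2(J')$ and, since $H\not\subseteq M^2$ forces the $M$-adic valuation of $H$ to be $1$, one shows $V_2(H)=V_2(J')$; as $J'$ is invertible, $V(J')$ is finite, so $V_2(H)$ is finite.

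Actually, the cleanest route for $(b)\Rightarrow(c)$ is via assertion $(g)$ of Theorem \ref{24} in local form: for the clean ideal $H$, use the ISP-factorization together with Lemma \ref{6}-style reasoning to peel off $\sqrt H$ and write $H=\sqrt H\,H'$; then $V_2(H)=V(H')$ up to the finitely many invertible primes in $V(\sqrt H)$, and invertibility of $H'$ (forced since $H$ and $\sqrt H$ are invertible) gives finiteness of $V(H')$, hence of $V_2(H)$. The implication $(c)\Rightarrow(d)$ is trivial, since by the discussion preceding the theorem every $M\in Z$ contains a clean ideal, and $(c)$ says all clean ideals have finite $V_2$.

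The substance is in $(d)\Rightarrow(a)$. Assuming $(d)$, I would verify criterion $(g)$ of Theorem \ref{24}, or equivalently $(b)$ (no critical maximal ideals), equivalently $(c)$ of Theorem \ref{24} (the radical of an invertible ideal is invertible). Suppose for contradiction $A$ has a critical maximal ideal $N$, i.e. $N$ contains no radical invertible ideal; note $N\in Z$ since an invertible maximal ideal is trivially a radical invertible ideal. By hypothesis $(d)$, $N$ contains a clean ideal $H$ with $V(H)\cap Z=\{N\}$, $H\not\subseteq N^2$, and $V_2(H)$ finite. The idea is that $\sqrt H$ should then be invertible, contradicting criticality of $N$: consider the ideal $\sqrt H$. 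Its value at any invertible maximal ideal $P\in V(H)$ is $1$ (since $A_P$ is a DVR), its value at $N$ is $1$ (because $H\not\subseteq N^2$), and these are the only primes in $V(H)=V(\sqrt H)$. To conclude $\sqrt H$ is finitely generated we need $V(H)$ to be, in a suitable sense, a finite or "uniformly bounded" closed set; here the finiteness of $V_2(H)$ is exactly the lever, combined with the weak-factorization hypothesis to control the noninvertible primes. Concretely: $V(H)$ may be infinite, but only finitely many of its members are noninvertible (just $N$, by cleanness) and, I claim, only finitely many have $H$-value $\geq 2$, namely $V_2(H)$, which is finite by $(d)$. Removing those, $H$ agrees with $\sqrt H$ times an invertible ideal supported on $V_2(H)$, so $\sqrt H$ is a quotient of the invertible $H$ by an invertible ideal, hence invertible. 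This contradicts $N$ being critical, so $A$ has no critical maximal ideal and Theorem \ref{24}$(b)$ gives that $A$ is an SP-domain.

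The main obstacle will be the last step: making rigorous the passage from "$H$ invertible, $V_2(H)$ finite, finitely many noninvertible primes in $V(H)$" to "$\sqrt H$ invertible." The danger is that $V(H)$ is an infinite closed set of invertible maximal ideals on each of which $H$ and $\sqrt H$ have value $1$, yet $\sqrt H$ need not be invertible merely because $H$ is — invertibility is finite generation, not a local condition. I expect the resolution to use the almost-Dedekind structure together with $V_2(H)$ finite to literally factor $H=\sqrt H\cdot K$ with $K$ an explicit invertible ideal (a product of powers of the finitely many primes in $V_2(H)$, adjusted at $N$), so that $\sqrt H=HK^{-1}$ is manifestly invertible; verifying this factorization holds as ideals (not just locally) is the delicate point, and it is presumably where weak factorization and the one-dimensionality are used in full force.
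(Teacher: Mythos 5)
Your implications $(a)\Rightarrow(b)$, $(c)\Rightarrow(d)$ and $(d)\Rightarrow(a)$ are fine; in fact your $(d)\Rightarrow(a)$ is essentially the paper's argument (the paper exhibits the invertible radical ideal $HP_1^{-k_1}\cdots P_n^{-k_n}\subseteq M$, where $V_2(H)=\{P_1,\dots,P_n\}$ and $HA_{P_i}=P_i^{k_i}A_{P_i}$; your version, showing $\sqrt H=H\,P_1^{-(k_1-1)}\cdots P_n^{-(k_n-1)}$ is invertible, is the same computation, and the ``delicate point'' you flag is not delicate: in an almost Dedekind domain every ideal is the intersection of its localizations and radicals commute with localization, so the identity is checked locally; note also $V_2(H)\cap Z=\emptyset$ because $H\not\subseteq M^2$, so the $P_i$ are indeed invertible).

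The genuine gap is $(b)\Rightarrow(c)$. Your argument rests on the claim that a radical invertible ideal cannot be contained in the noninvertible maximal ideal $M$; this is false --- a noninvertible maximal ideal containing a radical invertible ideal is exactly a \emph{non-critical} one, and in any non-Dedekind SP-domain every $M\in Z$ contains such an ideal, so your claim would contradict assertion $(a)$ of the very theorem. Lemma \ref{10}$(a)$ gives nothing of the sort. Once this claim is removed, the case in which the \emph{invertible} factor $J$ of the ISP-factorization $H=JQ_1\cdots Q_n$ is the factor lying in $M$ is left completely open: there $V(J)\ni M$, so Lemma \ref{10}$(b)$ gives no finiteness, and $V_2(H)\subseteq V_2(J)\cup\cdots$ with $V_2(J)$ a priori infinite. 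In fact factoring the invertible ideal $H$ itself can never work: for any invertible maximal $P\supseteq H$ one has the legitimate ISP-factorization $H=(HP^{-1})P$, so the factorization carries essentially no information about $V_2(H)$. Your ``cleanest route'' is also circular, since writing $H=\sqrt H\,H'$ for every ideal is precisely criterion $(g)$ of Theorem \ref{24}, i.e.\ equivalent to $A$ being an SP-domain. The paper's proof of $(b)\Rightarrow(c)$ proceeds quite differently: assuming $V_2(H)$ contains an infinite set $\{P_n\}$ of invertible maximal ideals, it applies the ISP hypothesis not to $H$ but to the non-finitely-generated ideal $I=\sum_{n\geq 0}HP_{2n+1}^{-1}$, writes $I=JQ$, and (using $M\in V(I)-V_2(I)$) reaches a contradiction in both cases $M\supseteq J$ and $M\supseteq Q$, via the comaximal cancellation lemma \cite[Lemma 5.1]{Og} and the fact that $V_2$ of a product of radical ideals is closed \cite[Lemma 1.10]{AD}. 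Some such construction, exploiting ISP on a non-invertible ideal, is the missing idea in your proposal.
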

\begin{proof}
We may assume that $A$ is not a Dedekind domain.
Set $F=Max(A)-Z$.
$(a)$ $\Rightarrow$ $(b)$ is obvious.
$(b)$ $\Rightarrow$ $(c)$
Assume, to the contrary, that $H$ is a clean ideal and $V_2(H)$ contains an infinite set $\{ P_n\mid n\geq 1\}\subseteq F$.  Set $V(H)\cap Z=\{M\}$.
Let $I$ be the (integral) ideal $\sum_{n\geq 0}HP^{-1}_{2n+1}$. Since $H\subseteq  I$ and $V(H)\cap Z=\{M\}$, we get  $V(I)\cap Z=\{M\}$, because $M\supseteq H=P_{2n+1}HP^{-1}_{2n+1}$ implies $M\supseteq HP^{-1}_{2n+1}$. As $A$ is an ISP-domain, we can write $I=JQ$ with $J$ an invertible ideal and $Q\neq A$ a product of radical ideals. 
Since $M\in V(I)-V_2(I)$, we have one of the two   cases below.

{\em Case 1: $M\supseteq J$ and $M\not\supseteq Q$}.
Then $V(Q)\cap Z$ is empty, so  $Q$ is invertible, cf. Lemma \ref{10}. So $I=JQ$ is invertible, hence finitely generated. Then $HP^{-1}_{2n+1}\subseteq HP^{-1}_{1}+\cdots + HP^{-1}_{2n-1}$ for some $n\geq 1$. Since $H$ can be cancelled and the other ideals involved are invertible and comaximal, we get 
$P^{-1}_{2n+1}\subseteq (P_1\cap  \cdots \cap P_{2n-1})^{-1}$ (cf. \cite[Lemma 5.1]{Og}), hence $P_{2n+1}\supseteq P_1\cap  \cdots \cap P_{2n-1}$, which is a contradiction.

{\em Case 2: $M\not\supseteq J$ and $M\supseteq Q$}. 
Since $H\subseteq Q$ and $H\not\subseteq M^2$, we have that $V_2(Q)\cap Z=\emptyset$. As $Q$ is  a product of radical ideals, \cite[Lemma 1.10]{AD} shows  that $V_2(Q)$ is closed, so    $V_2(Q)$ is finite, cf. Lemma \ref{10}.
Note that $P_{2n}\in V_2(I)$ for every $n\geq 1$. 
Consequently, there exists some $m\geq 1$ such that 
$P_{2n}\in V(J)$ for each $n\geq m$. By Lemma \ref{10} and the fact that $H\subseteq J$, we get $V(J)\cap Z=\{M\}$, which is a contradiction.
 
$(c)$ $\Rightarrow$ $(d)$ is clear. 
$(d)$ $\Rightarrow$ $(a)$ By \cite[Theorem 2.1]{O}, it suffices to show that each  $M\in Z$ contains an invertible radical ideal. By $(d)$, $M$ contains a clean ideal $H$ such that $V_2(H)$ is finite, say equal to $\{ P_1,...,P_n\}$. 
For each $i$ between $1$ and $n$, we have $HA_{P_i}=P^{k_i}_iA_{P_i}$ for some $k_i\geq 2$. Then   $HP_1^{-k_1}\cdots P_n^{-k_n}$ is an invertible radical ideal contained  in $M$.
\end{proof}

The SP-domain $A$ constructed in \cite[Example 4.3]{O} has nonzero Jacobson radical and no $M\in Max(A)$  finitely generated. Thus $A$ does not have
 weak factorization.

\begin{corollary}\label{21}
Let $A$ be almost Dedekind domain having only finitely many noninvertible maximal ideals. Then $A$ is an ISP-domain if and only if $A$ is an SP-domain.
\end{corollary}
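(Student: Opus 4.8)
The plan is to obtain this as an immediate consequence of Theorem \ref{3}. The only point that needs verification is that the running hypothesis of that theorem --- that $A$ has weak factorization --- is fulfilled under the present assumption.

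First I would recall the characterization of weak factorization stated just before Theorem \ref{3} (which rests on \cite[Proposition 4.2.14]{FHL}): an almost Dedekind domain has weak factorization precisely when every nonzero element lies in at most finitely many noninvertible maximal ideals. Under our hypothesis $A$ has only finitely many noninvertible maximal ideals altogether, so this condition is satisfied trivially --- every nonzero element of $A$ is contained in at most that fixed finite number of noninvertible maximal ideals. Consequently $A$ has weak factorization, and Theorem \ref{3} is applicable to $A$.

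At this stage the equivalence $(a)\Leftrightarrow(b)$ of Theorem \ref{3} says exactly that $A$ is an SP-domain if and only if $A$ is an ISP-domain, which is the claim. (If $A$ happens to be a Dedekind domain there is nothing to prove, since a Dedekind domain is simultaneously an SP-domain and an ISP-domain; in all other cases Theorem \ref{3} settles it directly.) I do not expect any genuine obstacle: the corollary is a straightforward specialization of Theorem \ref{3}, and the sole --- essentially routine --- step is the passage from ``finitely many noninvertible maximal ideals'' to ``weak factorization'' via the cited result. An alternative, more hands-on route would be to verify condition $(d)$ of Theorem \ref{3} directly, using prime avoidance to produce, inside each noninvertible maximal ideal, a clean ideal avoiding the remaining (finitely many) noninvertible maximal ideals and then trimming it at the finitely many critical-type places; but invoking Theorem \ref{3} is cleaner.
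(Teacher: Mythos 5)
Your proposal is correct and matches the paper's intent: the corollary is stated immediately after Theorem \ref{3} with no separate proof, the point being exactly the one you make, namely that finitely many noninvertible maximal ideals trivially gives the weak factorization hypothesis (via the criterion from \cite[Proposition 4.2.14]{FHL} recalled before the theorem), after which the equivalence $(a)\Leftrightarrow(b)$ of Theorem \ref{3} yields the statement.
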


\begin{corollary}\label{22}
Let $A$ be an ISP-domain which has weak factorization and  $B$  a one-dimensional overring of $A$. Then $B$ is an SP-domain.
\end{corollary}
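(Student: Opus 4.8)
The plan is to show that $B$ is an almost Dedekind ISP-domain having weak factorization, and then to invoke Theorem~\ref{3}.

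First, by Corollary~\ref{11} the overring $B$ is again an ISP-domain, and since $\dim B=1$, Corollary~\ref{12} shows that $B$ is almost Dedekind. By Theorem~\ref{7}, $A$ is a Pr\"ufer domain, so $B$ is a flat overring of $A$ (cf. \cite[page~798]{R}); in particular $B_M=A_{M\cap A}$ and $M=(M\cap A)B$ for every $M\in Max(B)$ by \cite[Theorem~2]{Ak}, and the contraction map $Spec(B)\to Spec(A)$ is injective. By \cite[Proposition~4.2.14]{FHL} it therefore suffices to prove that every nonzero element of $B$ lies in only finitely many noninvertible maximal ideals of $B$; once this is established, Theorem~\ref{3} yields that $B$ is an SP-domain.

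The next step is to transfer the problem to $A$. Fix $0\ne b\in B$ and write $b=a/s$ with $a,s\in A\setminus\{0\}$, and put $c=as$. If $M\in Max(B)$ contains $b$ and $P=M\cap A$, then $B_M=A_P$ is a DVR (Corollary~\ref{12}), so $P$ is a height-one prime of $A$; moreover $b\in PA_P$ forces $a\in P$ or $s\in P$, hence $c\in P$. If in addition $M$ is not invertible, then $M=PB$ is not finitely generated, which forces $P$ not to be finitely generated (its generators would generate $PB$). Since $Spec(B)\to Spec(A)$ is injective, the noninvertible maximal ideals of $B$ containing $b$ inject into the set of non-finitely-generated height-one primes of $A$ containing $c$, and it remains to show that this set is finite.

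This last assertion is the heart of the argument, and I would prove it by contradiction. Suppose $P_1,P_2,\dots$ are infinitely many non-finitely-generated height-one primes of $A$ containing $c$. They are pairwise comaximal, since two of them inside a common maximal ideal would make the corresponding localization a valuation domain with two incomparable nonzero primes; and each $A_{P_i}$ is a DVR, with $cA_{P_i}=P_i^{n_i}A_{P_i}$ for some $n_i\ge 1$. Using comaximality together with the cancellation device from the proof of Lemma~\ref{10}$(b)$ (cf. \cite[Lemma~5.1]{Og}), one builds from $c$ and the $P_i$ a nonzero integral ideal $I$ of $A$ of the type $\sum_i cP_i^{-n_i}$ that is not finitely generated; analysing $I$ locally at the unique maximal ideals lying above the $P_i$ (which exist by Theorem~\ref{7}$(b)$), one then shows that $I$ is nondivisorial, with $I_\nu$ strictly larger than $I$. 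Finally, weak factorization of $A$ gives $I=I_\nu M_1\cdots M_m$ with maximal ideals $M_1,\dots,M_m$, and the contradiction is that these finitely many $M_j$ cannot account for the discrepancy between $I$ and $I_\nu$ that is visible at infinitely many of the $P_i$. The hard part is exactly this step: exhibiting an ideal built from $\{P_i\}$ that is provably nondivisorial and then extracting the contradiction from its weak factorization — delicate because $A$ need not be almost Dedekind (it may have dimension greater than one, as in Section~4) and need not have finite character, so infinite intersections, colons and divisorial closures do not in general commute with localization, and translating the global equality $I=I_\nu M_1\cdots M_m$ into local statements at the $P_i$ and at the maximal ideals above them requires care. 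By contrast, the reductions via Corollaries~\ref{11} and~\ref{12}, the contraction correspondence for the flat overring $B$, and the concluding appeal to Theorem~\ref{3} are routine.
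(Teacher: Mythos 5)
Your overall strategy is the right one, and the routine parts match the paper's: $B$ is an ISP-domain by Corollary~\ref{11}, it is almost Dedekind since it is one-dimensional (Corollary~\ref{12}), and once you know $B$ has weak factorization, Theorem~\ref{3} finishes the proof. The problem is that the one step that actually needs an argument --- that $B$ has weak factorization, equivalently (via \cite[Proposition 4.2.14]{FHL}) that each nonzero element of $B$ lies in only finitely many noninvertible maximal ideals of $B$ --- is exactly the step you do not prove. Your reduction to the claim that a nonzero $c\in A$ lies in only finitely many non-finitely-generated height-one primes of $A$ is fine, but the proposed proof of that claim is only a sketch: you never establish that the ideal $\sum_i cP_i^{-n_i}$ is nondivisorial, nor how the factorization $I=I_\nu M_1\cdots M_m$ produces a contradiction, and you yourself flag these as the ``hard part'' that you cannot carry out (indeed, since $A$ may have dimension $>1$ and need not have finite character, the local-global manipulations you would need are not routine). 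So as written the proposal has a genuine gap at its core.

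The paper avoids this entirely: by Theorem~\ref{7}, $A$ is a strongly discrete Pr\"ufer domain, and then the transfer of weak factorization from $A$ to the overring $B$ is exactly the content of the cited result \cite[Corollary 4.3.3]{FHL}; after that, Corollary~\ref{11} and Theorem~\ref{3} conclude as you intended. If you want a self-contained argument instead of the citation, you would have to supply a complete proof of the transfer step --- essentially reproving that corollary --- which your current text does not do.
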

\begin{proof}
By Theorem \ref{7}, $A$ is a strongly discrete Pr\"ufer domain, so $B$ has weak factorization, cf. \cite[Corollary 4.3.3]{FHL}. Now apply Corollary \ref{11} and Theorem \ref{3}.
\end{proof}

The following question remains.

\begin{question}
 Is every one-dimensional ISP-domain an SP-domain ?
\end{question}

\section{An example}

In this final section we give an example of a two-dimensional ISP-domain $A$ which is not h-local. Hence $A$ is neither an SP-domain nor a ZPUI-domain.

\begin{proposition}\label{25}
Let $C$ be an SP-domain but not  Dedekind,  $M=qC$  a maximal principal ideal of $C$ and  $D$  a DVR   with quotient field $C/M$.
Assume there exists a unit $p$ of $C$ such that $\pi(p)$ generates the maximal ideal of $D$, where $\pi:C\rightarrow C/M$ is the canonical map.
Then the pull-back domain $A=\pi^{-1}(D)$ is a two-dimensional ISP-domain which is not h-local.
\end{proposition}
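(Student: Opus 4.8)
The plan is to exploit the pull-back structure of $A=\pi^{-1}(D)$. Since $M=qC\subseteq A$ with $q\in A$, the ideal $M$ is a common (prime) ideal of $A$ and $C$, with $A/M\cong D$ and $C/M\cong\mathrm{Frac}(D)$, and $A\subseteq C\subseteq A[1/q]$, $A[1/q]=C[1/q]$. The hypotheses on $p$ enter as follows: because $\pi(p)$ generates $\mathfrak m_D$ we have $\mathfrak m_D^{k}=\pi(p^{k})D$, and because $p$ is a \emph{unit} of $C$ we get $p^{k}C=C$, hence $M=qC=qp^{k}C=p^{k}M\subseteq p^{k}A$ for all $k\ge 0$. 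Thus $M=\bigcap_{k}p^{k}A$, and the ideals of $A$ containing $M$ are exactly $M$ and the principal (hence invertible) ideals $p^{k}A$. I would first record two structural facts: (i) $A$ is a Pr\"ufer domain, by Fontana's criterion for pull-backs (both $C$ and $D$ are Pr\"ufer and $\mathrm{Frac}(D)=C/M$); (ii) $\dim A=2$. For (ii): primes of $A$ not containing $q$ are exactly those not containing $M$ (if $q\in P$ then $q^{2}C\subseteq P$, and for $m=qc\in M$ one has $m^{2}=q^{2}c^{2}\in P$, so $M\subseteq P$), and these correspond bijectively to $\mathrm{Spec}(C)\setminus\{M\}$ via $A[1/q]=C[1/q]$; while the primes containing $M$ correspond to $\mathrm{Spec}(D)=\{(0),\mathfrak m_D\}$. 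Hence $A$ has the maximal ideal $N:=\pi^{-1}(\mathfrak m_D)=pA$ of height $2$ above $M$, all other maximal ideals being of the form $Q\cap A$ with $Q\in\mathrm{Max}(C)\setminus\{M\}$ and of height $1$, and no chain of length three exists, so $\dim A=2$.

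The heart of the argument is that $A$ is an ISP-domain. Let $I$ be a proper nonzero ideal of $A$; the goal is to reduce to the case $I+M=A$ (equivalently $q\notin\sqrt I$). If $M\subseteq I$ then $I=M$ (prime, so $I=A\cdot M$) or $I=p^{k}A=(p^{k-1}A)\,N$ with $N$ maximal. If $I\not\subseteq M$, write $\pi(I)=\mathfrak m_D^{k}$; then $I+M=p^{k}A$ and $I=(p^{k}A)(p^{-k}I)$, with $p^{-k}I$ an integral ideal coprime to $M$ (using that $A$ is Pr\"ufer and $p^{k}A$ is invertible). If $I\subseteq M$, set $e=v_{M}(I)$, the value in the DVR $A_M=C_M$; factoring out the principal ideal $q^{e-1}A$ reduces to $v_{M}(I)=1$, and then a short computation in the rank-two valuation domain $A_N$ shows that $IA_N$ is either $MA_N$ or principal — in the first case $I=M\cdot(\mathfrak b\cap A)$ where $IC=q\mathfrak b$, in the second $I=(\theta A)\,I^{\#}$ for a suitable $\theta=qp^{m}\in A$, and in each case the cofactor is coprime to $M$ (the few principal ideals contained in $M$, e.g. $qA=(qp^{-1}A)\,N$, being disposed of directly). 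It then remains to treat $I$ proper and coprime to $M$. Here $IC\cap A=I$ (check locally: at $Q\cap A$ use $A_{Q\cap A}=C_Q\supseteq C$, at $N$ use $IA_N=A_N$), so $IC$ is a proper ideal of the SP-domain $C$ coprime to $M$, whence $IC=Q_1'\cdots Q_n'$ with each $Q_i'$ a proper radical ideal of $C$. Setting $Q_i:=Q_i'\cap A$ (proper radical ideals of $A$, coprime to $M$), one checks locally that $I=Q_1\cdots Q_n$: at $N$ both sides are the unit ideal, and at $Q\cap A$ one compares $v_Q$-values, noting that $qx\in Q_i'\cap A$ realizes the $v_Q$-value of any $x\in Q_i'$. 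Combining all cases, every proper nonzero ideal of $A$ has the form required by Definition \ref{13}.

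For the final assertion, suppose $A$ were h-local; then $A/xA$ is semilocal for every nonzero $x$, i.e.\ $A$ is of finite character. But $C$ is an almost Dedekind domain which is not Dedekind, hence not of finite character: in a finite-character almost Dedekind domain every maximal ideal $\mathfrak m$ is finitely generated (choosing $0\ne x\in\mathfrak m$ lying in only the maximal ideals $\mathfrak m,\mathfrak m_1,\dots,\mathfrak m_r$ and then $y\in\mathfrak m\setminus(\mathfrak m^{2}\cup\mathfrak m_1\cup\cdots\cup\mathfrak m_r)$, one gets $\mathfrak m=(x,y)$ by localizing), so the domain would be Noetherian by Cohen's theorem, hence Dedekind. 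Pick $0\ne x\in C$ lying in infinitely many maximal ideals of $C$; then $qx\in qC=M\subseteq A$ is a nonzero element of $A$ lying in $Q\cap A$ for each of those (infinitely many, distinct) maximal ideals $Q$ of $C$, so $A$ is not of finite character, a contradiction. Hence $A$ is not h-local; being two-dimensional it is not an SP-domain, and being non-h-local it is not a ZPUI-domain.

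I expect the main obstacle to be the ISP verification for ideals $I$ that meet $M$ non-trivially — the case $I\subseteq M$ and the straddling case — because there the relevant local ring is the rank-two valuation domain $A_N$, and one must carefully track the interplay among an ideal of $A$, its image in $D$, and its extension to the SP-domain $C$ across the non-finitely-generated prime $M$; the reductions to the coprime-to-$M$ case and the explicit principal factorizations built from $p$ and $q$ are exactly where the hypotheses on $p$, $q$ and $D$ are consumed.
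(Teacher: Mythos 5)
Your argument is correct and rests on the same two pillars as the paper's proof: the structure of $B=A_{pA}$ as a rank-two strongly discrete valuation domain with $\bigcap_k p^kA=M$ and $A=B\cap C$, and contraction of an SP-factorization from $C$ for the part of the ideal coprime to $M$; likewise, non-h-locality is obtained in both from a nonzero element of $A$ lying in infinitely many maximal ideals. Where you differ is in the bookkeeping. The paper uses the intersection decomposition $I=(IB\cap A)\cap(IC\cap A)$, writes $IC=M^iJ$, $J\cap A=p^jL$, runs three cases on $IB$, and identifies the contracted product $R=(H_1\cap A)\cdots(H_n\cap A)$ with $L$ via the colon trick ($R:p=R$, $L:p=L$, $RC=LC$). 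You instead peel off explicit invertible factors ($p^k$ when $I\not\subseteq M$; $q^{e-1}$ and then $qp^m$ or the radical factor $M$ when $I\subseteq M$) and verify the contraction identity $I=Q_1\cdots Q_n$ for the coprime part locally, using $A_{Q\cap A}=C_Q$ and the fact that $q$ (or $p$) is a unit there; this yields slightly different factorizations (e.g.\ $M^2=qM$ rather than $M\cdot M$) but is equally valid, while the paper's intersection format avoids your case analysis on the position of $I$ relative to $M$. Your ``short computations'' do all hold, but should be written out: since $pA$ is the only maximal ideal of $A$ containing $q$ and the value group of $B$ is $\mathbb{Z}\times\mathbb{Z}$ ordered lexicographically with $v(q)=(1,0)$, $v(p)=(0,1)$, one gets $I\subseteq q^{e-1}A$ when $v_M(I)=e$, the dichotomy $IB=MB$ or $IB=qp^mB$ when $v_M(I)=1$, and (by lifting $1\in C/M$ into an ideal of $C$ not contained in $M$) the fact that the cofactors and the contracted radical ideals are not contained in $pA$, which is what your local checks at $pA$ require.
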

\begin{proof}
As $\pi(Mp^{-1})=0$, it follows that $M\subseteq pA$, so $A/pA$ is the residue field of $D$, because $A/M=D$ and $\pi(p)$ generates the maximal ideal of $D$.
Also, the only prime ideal of $A$ strictly containing $M$ is the maximal ideal $pA$.
By standard pull-back arguments (see for instance \cite[Lemma 1.1.4]{FHP}), the map $P\mapsto P\cap A$ is a bijection from $Spec(C)-V(M)$ to $Spec(A)-V(M)$ and $A_{P\cap A}=C_P$. By \cite[Corollary 1.1.9]{FHP}, $A$ is a two-dimensional Pr\"ufer domain. Also, by \cite[Lemma 1.1.6]{FHP}, we have $A[p^{-1}]=C[p^{-1}]=C$. Roughly speaking, $Spec(A)$ is obtained from $Spec(C)$ 
by adding the maximal ideal $pA\supseteq M$. Since $C$ is an almost Dedekind domain which is not  Dedekind, there exists a nonzero element $z\in A$ belonging to infinitely many maximal ideals of $A$, so $A$ is not h-local.
By \cite[Proposition 5.3.3]{FHP},
$B=A_{pA}$ is a two-dimensional strongly discrete valuation domain. It follows that $\cap_{t\geq 1} p^tA=M$. 

Let $I$ be an ideal of $A$. We observe that $I=IB\cap IC$. Indeed, if $N\in Max(A)-\{pA\}$, then $I\subseteq IC_{A-N}=IA_N$, so $IB\cap IC\subseteq \cap_{Q\in Max(A)}IA_Q=I$. In particular, we have $A=B\cap C$. Since $C$ is almost Dedekind and $M=qC$, we can write $IC=M^iJ$ where  $J$ is an ideal of $C$ with $M+J=C$ and $i\geq 0$, so $IC=M^i\cap J$. We also see that $H:=J\cap A\not\subseteq M$.
As $\cap_{t\geq 1} p^tA=M$,  we can write $H=p^jL=p^jA\cap L$ where  $L$ is an ideal of $A$ with $L\not\subseteq pA$ and $j\geq 0$. Consequently we get $$IC\cap A=M^i\cap J\cap A=M^i\cap H=M^i\cap p^jA\cap L$$
which equals either $M^i\cap L$ if $i\geq 1$ or $p^jA\cap L$ if $i=0$.
Using basic facts on valuation domains (see \cite[Section 17]{G}), it suffices to consider the following three cases. Each time we use the equality $I=(IB\cap  A) \cap (IC\cap A)$. 

Case 1: $IB=p^nB$ for some $n\geq 0$. We have $IB\cap A=p^nA$.
If $i\geq 1$, we get $I=p^nA\cap M^i\cap L=M^iL$. 
If $i=0$, we get $I=p^nA\cap p^jA\cap L=p^kL$ with $k=max(n,j)$.

Case 2: $IB=M^n$ for some $n\geq 1$. 
If $i\geq 1$, we get $I=M^n \cap M^i\cap L=M^kL$ with $k=max(n,i)$.
If $i=0$, we get $I=M^n\cap p^jA\cap L=M^nL$.

Case 3: $IB=p^nq^mA$ for some $m\geq 1$ and $n\in\mathbb{Z}$. We have $IB\cap A=p^nq^mA$, because $pA$ is the only maximal ideal containing $q$.
If $i>m\geq 1$, we get $I=p^nq^mA \cap M^i\cap L=M^iL$.
If $m\geq i\geq 1$, we get $I=p^nq^mA \cap M^i\cap L=p^nq^mL$.
If $i=0$, we get $I=p^nq^mA\cap p^jA\cap L=p^nq^mL$.

Consequently, to complete our proof, it suffices to show that $L$ is a product of radical ideals. Since $C$ is an SP-domain, we can write $LC=H_1\cdots H_n$ with each $H_i$ a radical ideal of $C$. Then each $J_i=H_i\cap A$ is a  radical ideal of $A$. Note that none of ideals $J_i$ is contained in $pA$, since $L\not\subseteq pA$. Set $R=J_1\cdots J_n$. Then  $R+pA=A$ and $L+pA=A$, so $R:p=R$ and $L:p=L$. Since  $RC=H_1\cdots H_n=LC$, we get $L=LC\cap A=RC\cap A=R$.
\end{proof}

Finally, we construct a specific domain satisfying the hypothesis of Proposition \ref{25}. We modify appropriately \cite[Example 3.4.1]{FHL}. 
If $A$ is a domain and $P_1$,...,$P_n$ are prime ideals of $A$, we denote by 
$A_{P_1\cup\cdots\cup P_n}$ the fraction ring of $A$ with denominators in 
$A-(P_1\cup\cdots\cup P_n)$.
Let $y$ and $(x_n)_{n\geq 1}$ be indeterminates over the rational field $\mathbb{Q}$. Consider the domain $$C=\bigcup_{n\geq 1} \mathbb{Q}[x_1,...,x_n,y/(x_1\cdots x_n)]_{(x_1)\cup \cdots \cup (x_n)\cup 
(y/(x_1\cdots x_n))}.$$
As $C$ is a union of an ascending chain of (semi-local) PID's, it is a one-dimensional Bezout domain. Adapting the proof of  \cite[Example 3.4.1]{FHL}, we see that the maximal ideals of $C$ are 
$N=\sum_{n\geq 1}(y/(x_1\cdots x_n))C$ 
and the principal ideals $(x_nC)_{n\geq 1}$. 
As $yC_M=MC_M$ for each $M\in Max(C)$, it follows that 
 $yC$ is a radical ideal, hence $N$ is non-critical. 
By \cite[Corollary 2.2]{O}, $C$ is an SP-domain. The residue field 
$C/x_1C$ is isomorphic to $K(y/x_1)$ where $K=\mathbb{Q}(x_n; {n\geq 2})$. Then $D=K[y/x_1]_{(y/x_1)}$ is a DVR with quotient field $C/x_1C$. Note that $x_1+y/x_1$ is a unit of $\mathbb{Q}[x_1,y/x_1]_{(x_1)\cup (y/x_1)}$, hence a unit of $C$. Moreover, the canonical map $C\rightarrow C/x_1C$ sends $x_1+y/x_1$ to $y/x_1$ which is a generator of the maximal ideal of $D$. Thus  $C$ satisfies the hypothesis of Proposition \ref{25}.
\\[2mm]

{\bf Acknowledgements.} 
The first author is highly grateful to Abdus Salam School of Mathematical Sciences Govt. Coll. University Lahore in supporting and facilitating this research.
The second author gratefully acknowledges the warm 
hospitality of the same institution during his  visits in the period 2006-2016.

\end{document}